\newtheorem{theorem}{Theorem}[section]
\newtheorem{lemma}[theorem]{Lemma}
\newtheorem{proposition}{Proposition}
\newcommand{\ep}{\varepsilon}
\newcommand{\eps}[1]{{#1}_{\varepsilon}}
\title{Variable Step Size Multiscale Methods for Stiff and Highly Oscillatory Dynamical Systems}
\author{Yoonsang Lee%
	\thanks{Corresponding author. \texttt{ylee@math.utexas.edu}  }}
	\affil{Department of Mathematics, The University of Texas at Austin, Austin, Texas 78712}
\author{Bjorn Engquist%
	\thanks{\texttt{engquist@math.utexas.edu}  }}
	\affil{Department of Mathematics and ICES, The University of Texas at Austin, Austin, Texas 78712}
\begin{document}
\maketitle

\bigskip


\begin{abstract}
We present a new numerical multiscale integrator for stiff and highly oscillatory dynamical systems. The new algorithm can be seen as an improved version of the seamless Heterogeneous Multiscale Method by E, Ren, and Vanden-Eijnden and the method FLAVORS by Tao, Owhadi, and Marsden. It approximates slowly changing quantities in the solution with higher accuracy than these other methods while maintaining the same computational complexity. To achieve higher accuracy, it uses variable  mesoscopic time steps which are determined by a special function satisfying moment and regularity conditions. Detailed  analytical and numerical comparison between the different methods are given.
\end{abstract}

\section{Introduction}\label{INTRO}
We consider numerical solutions of stiff and highly oscillatory dynamical systems of the form
\begin{equation}\label{eq:model}\frac{dx}{dt}=\eps{f}(x),\quad x(0)=x_0\end{equation}
where the Jacobian of $f_{\epsilon}$ has eigenvalues with large negative real parts or purely imaginary eigenvalues of large modulus. That is, the spectral radius is of the order,
$$\rho\left(\frac{\partial \eps{f}}{\partial x}\right)=\mathcal{O}\left(\frac{1}{\epsilon}\right)\gg 1,\quad 0<\epsilon\ll 1.$$
This imposes severe restrictions on the time steps.
Resolving the $\epsilon$ scale requires the time steps of a traditional direct numerical simulation (DNS) to be of order $\mathcal{O}(\epsilon)$ or less.

There are many numerical methods to approximate the solutions of (\ref{eq:model})  with less computational complexity than $\mathcal{O}(\frac{1}{\epsilon})$ for $\mathcal{O}(1)$ time intervals. Exponential integrators or Gautschi type methods \cite{ODE3,EXPINT1,GAUTSCHI} use an analytic form for the most significant part of the oscillatory solutions resulting in significantly less restriction on the time steps from stability and accuracy. Another method for highly oscillatory problems using asymptotic expansions in inverse powers of the oscillatory parameter (\cite{ISERLES} and references therein) has computational cost essentially independent of the oscillatory parameter.

In this paper, we focus on the following two forms of the model problem which have scale separation. First, we consider the problem with explicitly identified slow and fast variables,
\begin{equation}\label{eq:model1}
\begin{split}
\frac{d\xi}{dt}=&f_0(\xi,\eta ),\quad \xi(0)=\xi_0\\
\frac{d\eta}{dt}=&\frac{f_1(\xi,\eta )}{\epsilon},\quad\eta(0)=\eta_0,\quad 0<\epsilon \ll 1
\end{split}
\end{equation}
where $\eta$ is ergodic on some invariant manifold $\mathcal{M}(\xi)$ for fixed $\xi$. We also consider another problem 
\begin{equation}\label{eq:model2}
\frac{dx}{dt}=f_0(x)+\frac{f_1(x)}{\epsilon},\quad x(0)=x_0,\quad 0<\epsilon\ll 1
\end{equation}
where the unperturbed equation
$$\frac{dy}{dt}=\frac{f_1(y)}{\epsilon},\quad y(0)=y_0$$
is ergodic on some invariant manifold $\mathcal{M}(y_0)$.

In (\ref{eq:model1}), $\eta$ is called the fast variable because it has fast transient or highly oscillatory behavior when the Jacobian of $f_1$ has negative real parts or all imaginary parts. The slow variable $\xi(t)$ can be consistently approximated in any $\mathcal{O}(1)$ time by an averaged equation
$$\frac{d\Xi}{dt}=\bar{f}(\Xi):=\int_{\mathcal{M}(\Xi)}f_0(\Xi,\eta)d\mu(\Xi,\eta)$$
where $\mu(\Xi,\eta)$ is the invariant measure of $\eta$ for fixed $\Xi$. For more details, see \cite{AVG1} and \cite{AVG2}. In the case of (\ref{eq:model2}), it is often assumed for the analysis that there exists a diffeomorphism from $x$ to $(\xi,\eta)$ and this implies that there exist hidden slow variables in (\ref{eq:model2}). The existence of slow variables for these problems motivate the development of efficient numerical schemes for integrating the slow components of slow-fast systems without resolving all fast variables.

\begin{figure}
	\begin{center}
	\subfloat[HMM]{\label{fig:HMM}\includegraphics[width=7.3cm]{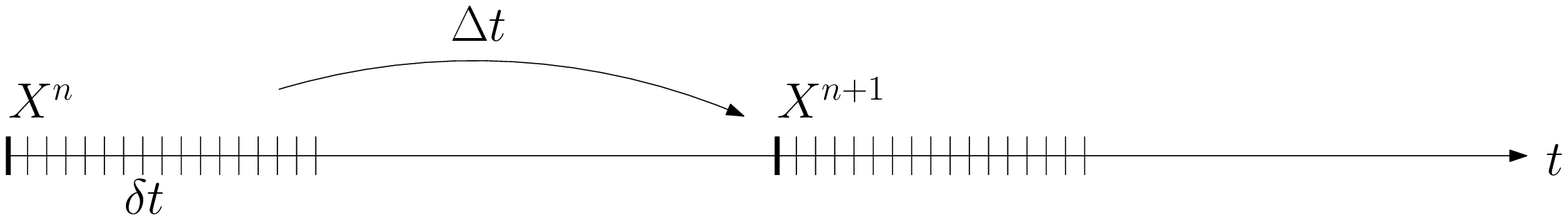}}\quad
	\subfloat[MSHMM]{\label{fig:MSHMM}\includegraphics[width=7.3cm]{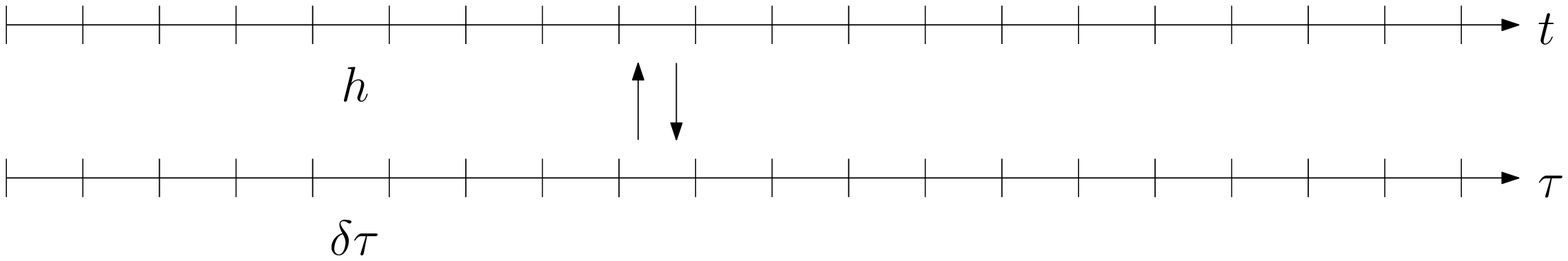}}\\
	\subfloat[FLAVORS]{\label{fig:FLAVORS}\includegraphics[width=7.3cm]{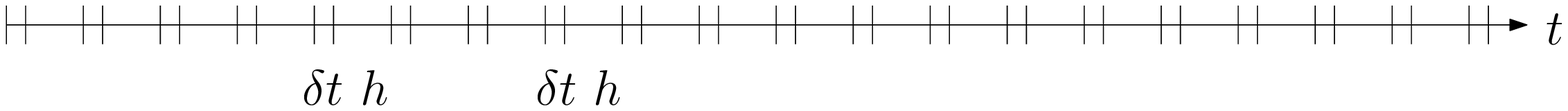}}\quad
	\subfloat[VSHMM]{\label{fig:NEW}\includegraphics[width=7.3cm]{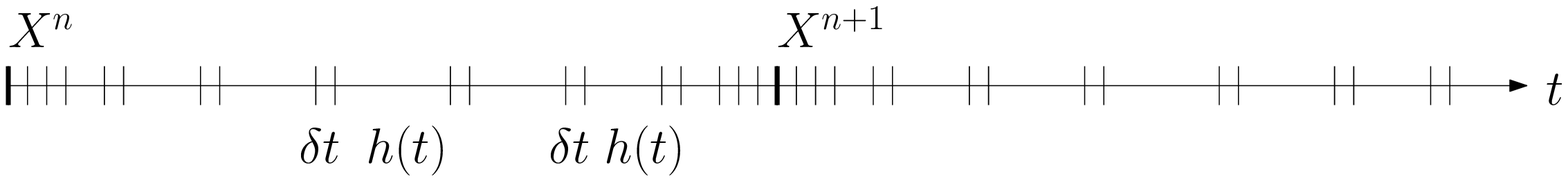}}\\
	\caption{Schematics of HMM, MSHMM, FLAVORS and VSHMM}\label{fig:schematics}
	\end{center}
\end{figure}

In this paper, we focus on Heterogeneous Multiscale Methods (HMM) framework \cite{generalHMM,HMM} that captures the effective behavior of the slow variables on the fly by solving the full scale problem in very short time intervals (see Figure \ref{fig:HMM}). HMM does not require any a priori information about the effective force, and has a suitable filtering kernel to estimate the force by time averaging of the local microscopic solution. Because the effective force is independent of $\epsilon$ and the fast variables, the time step for the slow variables which is called macro time step in HMM can be chosen independently of $\epsilon$.

One of the variants of HMM, the seamless Heterogeneous Multiscale Method first introduced by Fatkullin, and Vanden-Eijnden in \cite{FATKULLIN} and further developed by E, Ren, and Vanden-Eijnden in \cite{MSHMM} modifies HMM in that it does not require reinitialization of microscale simulation at each macro time step or each macro iteration step. In this strategy, the macro- and micro-models in (\ref{eq:model1}) use different time steps and exchange data at every step. The macroscale solver uses a mesoscale time step that is much finer than the one in HMM for the effective system, in order for the microscale system to relax and influence the macro scale (see Figure \ref{fig:MSHMM}). We will here label the method MSHMM for mesoscale HMM in order to differentiate it from other methods called seamless HMM (SHMM) for multi spatial and multi time scales without scale separation \cite{SUPERPARAMETRIZATION,TURBULENCE}.

A similar technique can also be used to solve the system of the form (\ref{eq:model2}) without identification of the slow and fast variables beforehand. It was first noted by Vanden-Eijnden \cite{ERIC}  and  later a variant was proposed and further developed by Tao, Owhadi and Marsden \cite{FLAVORS} called 'Flow Averaging Integrators' (FLAVORS). It is based on the averaging of the instantaneous flow of the system with hidden slow and fast variables instead of capturing the effective force of the slow variables. By turning the stiff part on over a microscopic time step ${\delta t}$ and off during a mesoscopic time step $h$, FLAVORS obtains computational efficiency (see Figure \ref{fig:FLAVORS}).

In Section \ref{MSHMMFLAVORS}, we show that MSHMM and FLAVORS share a common characteristic in that they both approximate the effective behavior of (\ref{eq:model1}) and (\ref{eq:model2}) respectively by solving the problem with increased $\epsilon$ values.

The increase of the $\epsilon$ value gives computational efficiency better than a direct approximation of the original problem but at the cost of reduced accuracy. The amplitude in highly oscillatory solutions, for example, is increased which is related to the increased $\epsilon$ value. Because of this loss of accuracy, it is difficult to generate higher order approximation of the effective behavior in both methods.

The goal of the proposed method that applies to the more general formulation (\ref{eq:model2}) is to increase accuracy by controlling the transient and the amplified oscillations while keeping the same computational efficiency and structure of implementation in the methods discussed above. And we call our proposed method a variable step size Heterogeneous Multiscale Method (VSHMM). To gain the control of the oscillations, the method uses variable mesoscopic time steps which are determined by a special function with a certain moment condition and regularity properties which is described in Section \ref{VSHMM}. Given a macro time step at which we want to sample the value of the averaged solution, the mesoscopic time step increases smoothly from fine one to a coarser mesoscale time step to obtain efficiency in computation. Once it reaches close to the next macro time step, the time step decreases again back to the original size and repeat this process for the next macro time step (see Figure \ref{fig:NEW} ).

VSHMM can also be used for many well-separated scale problems without using hierarchical iteration. Hierarchical iteration using the other multiscale methods - HMM, VSHMM and FLAVORS - has computational complexity which increases exponentially as the number of different scales increases \cite{threescale}. Using the variable step size method, we can develop a new method whose complexity increases proportional to the number of different scales. Here, we focus on the two well separated scale problems and the new method for many scales will be reported in a forthcoming paper by the authors \cite{VSSHMM}. The basic idea is to include different components of the force depending on the variable step size, from the full $f_{\epsilon}(x)$ for the shortest step size to only the slowest components for the longest step size. The intermediate step size will contain the intermediate to slow components of $f_{\epsilon}(x)$.

As stated above, we have to mention that VSHMM requires scale separation and ergodicity of the fast variables. For stiff dissipative problems without scale separation, efficient methods exist such as implicit methods for small systems and Chebyshev methods for large systems. Here we study the application of VSHMM to dissipative problems with a potential application of VSHMM for concurrent multiscale problems in mind.

This paper is organized in the following way. In Section \ref{MSHMMFLAVORS}, we review MSHMM and FLAVORS and show that they are equivalent in that they both solve a modified equation with increased $\epsilon$ value. In Section \ref{VSHMM}, we propose a new method as an extension of MSHMM or FLAVORS to control the transient and the amplified errors and introduce higher order methods. In Section \ref{ANAL}, we analyze the proposed method for dissipative and highly-oscillatory systems. In Section \ref{NUME}, numerical examples of dissipative and highly oscillatory systems are shown and also higher order method is verified.

\section{MSHMM and FLAVORS}\label{MSHMMFLAVORS}
In this section, we review and compare MSHMM and FLAVORS. They are shown to share common characteristics except in modifications to the way time-stepping is implemented.

The philosophy behind MSHMM \cite{MSHMM} is that we use different clocks for slow and fast variables. It requires identification of slow and fast variables in advance and is applicable to (\ref{eq:model1}).
If we denote the micro and mesoscopic time steps by $\delta \tau$ and $h$, an explicit first order MSHMM solves for $\eta$ first,
$$\eta^{n+1}=\eta^n+\frac{{\delta \tau}}{\epsilon}f_1(\xi^n,\eta^n)=\eta^n+\frac{h}{\epsilon'}f_1(\xi^n,\eta^n)$$
with $\epsilon'=\epsilon\frac{h}{{\delta \tau}}$ and it uses the information from this calculation for the evolution of $\xi,$
$$\xi^{n+1}=\xi^n+h f_0(\xi^n,\eta^{n+1}).$$
This is a consistent approximation to the model problem with $\epsilon$ modified to the increased value $\epsilon'=\epsilon\frac{h}{{\delta \tau}}>\epsilon$.

In \cite{FLAVORS}, Tao\textit{ et al.} propose another method based on the averaging of the instantaneous flow of the system, which is called FLAVORS. It turns on and off the stiff parts to capture the effective flow of the slow variables. It solves the full problem (\ref{eq:model2}) with the stiff part with a micro step ${\delta t}$,
$$x^{n+*}=x^n+{\delta t}\left(f_0(x^n)+\frac{f_1(x^n)}{\epsilon}\right).$$
It then uses a mesoscopic time step $h$ without the stiff part
$$x^{n+1}=x^{n+*}+hf_0(x^{n+*}).$$
If we consider this as one single step, it is 
$$x^{n+1}=x^n+\left({\delta t}f_0(x^n)+hf_0\left(x^n+{\delta t}\left(f_0(x^n)+\frac{f_1(x^n)}{\epsilon}\right)\right)\right)+\frac{({\delta t}+h)}{\epsilon'}f_1(x^n)$$
$$=x^n+({\delta t}+h)\left(f_0(x^{n+**})+\frac{f_1(x^n)}{\epsilon'}\right)$$
with an $\epsilon$ value increased to $\epsilon'=\frac{{\delta t}+h}{{\delta t}}\epsilon$ and $x^{n+**}$ such that 
$$f_0(x^{n+**})=\frac{1}{{\delta t}+h}\left({\delta t}f_0(x^n)+hf_0(x^{n+*})\right).$$
Hence, it solves the model problem with modification to $\epsilon$ and a minor difference in the $f_0$ term.

If we apply FLAVORS to (\ref{eq:model1}), it becomes much clearer that MSHMM and FLAVORS share common characteristics. As before, we use the explicit first order Euler method for each step for $\xi$ and $\eta$.  We have
$$\eta^{n+1}=\eta^n+\frac{{\delta t}}{\epsilon}f_1(\xi^n,\eta^n)$$
and
$$\xi^{n+*}=\xi^n+{\delta t}f_0(\xi^n,\eta^n)$$
$$\xi^{n+1}=\xi^{n+*}+hf_0(\xi^{n+*},\eta^{n+1})$$

The evolution of $\xi$ can be represented in a compact form,
\begin{equation}
\begin{split}
\xi^{n+1}=& \xi^n+{\delta t}f_0(\xi^n,\eta^n)+hf_0(\xi^{n+*},\eta^{n+1})\\
=&\xi^n+({\delta t}+h)\left(\theta f_0(\xi^n,\eta^n)+(1-\theta)f_0(\xi^{n+*},\eta^{n+1})\right)\\
=&\xi^n+({\delta t}+h)\left(\theta f_0(\xi^n,\eta^n)+(1-\theta)f_0(\xi^{n},\eta^{n+1})\right)+\mathcal{O}({\delta t}h)
\end{split}
\end{equation}
where $\theta=\frac{{\delta t}}{{\delta t}+h}$.

If we now choose
$${\delta \tau} \textrm{ in MSHMM}={\delta t} \textrm{ in FLAVORS}$$
and
$$h \textrm{ in MSHMM}={\delta t}+h\textrm{ in FLAVORS},$$
then they both are first order approximations (with slightly different fact that FLAVORS uses the $\theta$ method for the slow variable) to the following modified equation, with increased $\epsilon'=\frac{{\delta t}+h}{{\delta t}}\epsilon=(1+\alpha)\epsilon,$ where $\alpha=\frac{h}{{\delta t}}$,
\begin{equation}\label{eq:mod}
\begin{split}
\frac{d}{dt}{\tilde{\xi}}=&f_0(\tilde{\xi},\tilde{\eta} ),\qquad 0\leq t\leq T\\
\frac{d}{dt}{\tilde{\eta}}=&\frac{1}{(1+\alpha)\epsilon}f_1(\tilde{\xi},\tilde{\eta} )
\end{split}
\end{equation}

For simplicity, the reduction in the overall processes of time integration, which is $\alpha$, is called the savings factor. The savings factor gives information for computational efficiency of MSHMM or FLAVORS but it is not the actual computational efficiency of the methods. For FLAVORS, as an example, if the same order integrators are used for the forces with and without the stiff part, the number of function evaluations of the force terms for direct numerical simulations (DNS) and FLAVORS using the same micro time step $\delta t$ are given by $\lceil 1+\alpha\rceil$ and $2$ for the time $\delta t+h$ where $\lceil\cdot\rceil$ is the ceiling or the smallest integer function. Therefore, the computational efficiency of FLAVORS over DNS is 
\begin{equation}\label{eq:efficiency}
\frac{\lceil 1+\alpha\rceil}{2}.
\end{equation}

For better computational efficiency, it is obvious to use larger $\alpha$ values. But arbitrarily large $\alpha$ values do not guarantee the convergence of the methods to slow variables. In \cite{FLAVORS}, the relation between $\delta t$, $h$ and $\epsilon$ is analyzed for convergence:
\begin{equation}\label{eq:stepcond}
\frac{\delta t^2}{\epsilon^2}\ll h+\delta t\ll \frac{\delta t}{\epsilon}.
\end{equation}
Using $\alpha$ instead of $h$, we can check
\begin{equation}\label{eq:alphacond}
(\alpha+1) \epsilon\ll 1
\end{equation}
for the convergence of FLAVORS.

To see the effect of the increased $\epsilon'=(1+\alpha)\epsilon$ value, we compare the solution $\xi(t)$ of (\ref{eq:model1}) with the solution $\tilde{\xi}(t)$ of (\ref{eq:mod}) for $\mathcal{O}(1)$ time $t$. Let $\Xi(t)$ be the effective solution of (\ref{eq:model1}). Because (\ref{eq:mod}) has the same invariant measure of (\ref{eq:model1}), $\Xi(t)$ is also the effective solution of (\ref{eq:mod}). For the averaging error, it is normally expected to have the following type of error bound (see \cite{generalHMM,HMM,AVG1} for example)
\begin{equation}\label{eq:avgerr}\|\xi(t)-\Xi(t)\|_{\infty}\leq C\epsilon^{a}\end{equation}
for constants $a>0$ and $C$ which is dependent on time $t$ and independent of $\epsilon$. Similarly, the averaging error of $\tilde{\xi}$ by $\Xi$ is given by
$$\|\tilde{\xi}(t)-\Xi(t)\|_{\infty}\leq C(1+\alpha)^a\epsilon^{a}$$
which implies the following amplified error due to the increased $\epsilon'$ value.
$$\|\xi(t)-\tilde{\xi}(t)\|_{\infty}\leq C(1+\alpha)^a\epsilon^{a}.$$
If we want more computational efficiency, the savings factor $\alpha=\frac{h}{{\delta t}}$ should be larger. However then we lose accuracy because of the amplified oscillations. 
The key feature of the proposed method is to decrease $\mathcal{O}\left((\alpha\epsilon)^a\right)$ term to $\mathcal{O}(\epsilon^a)$ which is independent of the savings factor, $\alpha$, while keeping the same computational efficiency depending on $\alpha$. Because we are looking for effective behavior of systems with $\mathcal{O}(\epsilon)$ perturbations, the diminished oscillation and fluctuation to $\mathcal{O}(\epsilon)$ is accurate enough to approximate the effective behavior of the slow variables.

\section{A Variable Step Size Mesoscale HMM (VSHMM)}\label{VSHMM}
In this section, we propose a variable step size Heterogeneous Multiscale Method (VSHMM) which controls the transient and the amplified oscillations of MSHMM and FLAVORS while maintaining the computational complexity and general structure of the methods. The new method is a modification of MSHMM and FLAVORS. The key feature of the new method is to use variable sizes of mesoscopic time step.

In dissipative problems, when all eigenvalues of the Jacobian of $f_1$ or the real part of them are negative, transient behavior of the fast variable to the quasi stationary state contributes a significant part of the error \cite{ODE2}. If $\epsilon$ is modified to a greater value, then the fast variables change slower, resulting in an error that remains in the quasi stationary solution after the transient. Therefore, it is necessary to use small $\epsilon$ values at the beginning of each macroscopic time step to guarantee that the fast variables relaxed rapidly.

In highly oscillatory problems, when the eigenvalues of the Jacobian of $f_1$ are imaginary,  as we mentioned in the previous section, the increased $\epsilon$ amplifies the oscillations and this dominates the error which can be controlled using higher order methods. If there is no a priori identification of slow variables, the only possible way to control this amplified error is to use a smaller $\epsilon$ value locally which requires a finer time step. 

Our method, VSHMM, reconcile these contradictory situations by introducing time dependent $\epsilon$ values. At the beginning and the end of each macro time step, it uses very fine mesoscopic time steps to overcome problems such as the delayed relaxation of the fast variables in dissipative problems and amplified oscillations in highly oscillatory problems,  while using coarse mesoscopic time steps at all other times to save computational complexity (see Figure \ref{fig:schematics} for comparison of time stepping with the other method). Once the system has evolved to the next macro time step,  we iterate the same process.

Therefore, by using the variable mesoscopic step sizes, we expect to obtain a more accurate approximation of the slow variables than MSHMM and FLAVORS, only after macro time steps. We emphasize that there is no explicit macro time stepping in the new method but we use the calculated values only at the specified macro time intervals because other values are not guaranteed to give less amplified errors. In the highly oscillatory problems, if we want to have the same savings factor as MSHMM and FLAVORS, the intermediate values of the new method between two macroscopic sampling time becomes more oscillatory than MSHMM or FLAVORS (see Section \ref{NUME} for numerical examples). This is because $\epsilon$ is modified for a larger value than the modified $\epsilon$ of MSHMM or FLAVORS to compensate the loss of efficiency at the beginning and the end of each macro time step.

\subsection{Description of the new method}
In the proposed method, the mesoscopic time step $h$ for MSHMM or FLAVORS is described as a time dependent function. Let $K\in C^q_c((0,1))$ with a compact support in $(0,1)$ such that
\begin{eqnarray}
	\int_0^1K(t)dt&=&1,\\
	\frac{d^rK(t)}{dt^r}&=&0,\quad r=0,1,...,q \textrm{ for }t=0,1.
\end{eqnarray}
For a given macro time step $\Delta T$, the time dependent mesoscopic time step $h(t)$ is given by
\begin{equation}\label{eq:mesostep}h(t)=\alpha {\delta t}{K}_{\Delta T,q}(\Theta_{\Delta T,q}^{-1}(t\mod \Delta T)),\end{equation}
for a savings factor $\alpha>1$ when $K_{\Delta T,q}$ is a rescaled version of $K$,
$$K_{\Delta T,q}=\frac{1}{\Delta T}K(\frac{t}{\Delta T})$$
and $\Theta_{\Delta T,q}(t)$ is the antiderivative of ${K}_{\Delta T,q}$ with $\Theta_{\Delta T,q}(0)=0$.

It can be easily verified that $K_{\Delta T,q}$ satisfies the following moment and regularity conditions
\begin{eqnarray}
	\label{eq:kerprop1}\int {K}_{\Delta T,q}dt&=&\Delta T,\\
	\label{eq:kerprop2}\frac{d^r {K}_{\Delta T,q}(t)}{dt^r}&=&0,\quad r=0,1,...,q,\quad t=0,\Delta T.
\end{eqnarray}

Because (\ref{eq:model1}) can be seen a special case of (\ref{eq:model2}), we describe the proposed method for the case of (\ref{eq:model2}).

\bigskip
\textbf{ALGORITHM - one macro time step integration of VSHMM}\\
Let $\tilde{x}^n$ be the numerical solution to (\ref{eq:model2}) at $t=t^n:=n\Delta T$ with a savings factor $\alpha$.
\begin{enumerate}
	\item Integrate the full system for $\delta t$ to resolve the fast time scale
	$$\hat{x}(t^n+{\delta t})=\Phi^{\epsilon}_{\delta t} \tilde{x}(t^n)$$
	where $\Phi^{\epsilon}_{\delta t}$ is an integrator of $dx/dt=f_0(x)+f_1(x)/\epsilon$ for $\delta t$.
	\item Update time 
	$$t=t^n+{\delta t}.$$
	
	\item Integrate the system without the stiff part with mesoscopic time step $h(t)$
	$$\hat{x}(t+h(t))=\Phi^0_{h(t)} \hat{x}(t)$$
	where $\Phi^0_{h(t)}$ is an integrator of $dx/dt=f_0(x)$ for $h(t)$.
	\item Update time
	$$t=t+h(t).$$
	\item If time is at the macroscopic time points, sample the solution
	$$\tilde{x}^{n+1}=\hat{x}(t)\quad \textrm{ if } t=(n+1)\Delta T, n\in\mathbb{N}.$$
	\item Repeat from 1 for the next macro time step integration.

\end{enumerate}
\bigskip
		
The mesoscopic step size $h(t)$ is very small when the simulation starts and smoothly increases. Once it reaches $\frac{\Delta T}{2}$, it decreases smoothly back to the small value again as $t\to \Delta T$. Equivalently, the new method solves the system with small $\epsilon$ value and the $\epsilon$ value increases smoothly to accelerate the computation and returns to the original small value for the next coarse step.

In VSHMM, the ratio between the mesoscopic and microscopic time steps is not constant. The following proposition illustrates that for a given savings factor $\alpha$ in (\ref{eq:mesostep}), the computational efficiency of VSHMM is same as the case when the mesoscopic time step is constant with the same savings factor.


\begin{proposition}\label{prop}
$$\alpha=\frac{1}{\Delta T}\int_{0}^{\Delta T}\frac{h(t)}{{\delta t}}dt.$$
\end{proposition}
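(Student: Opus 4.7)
The plan is a direct change-of-variables computation. Substituting the definition (\ref{eq:mesostep}) of $h(t)$ into the right-hand side and pulling out the constants $\alpha$ and $\delta t$ reduces the claim to showing
\begin{equation*}
\int_0^{\Delta T} K_{\Delta T, q}\bigl(\Theta_{\Delta T, q}^{-1}(t)\bigr)\, dt = \Delta T,
\end{equation*}
where I have used that $t \mod \Delta T = t$ on the integration interval.

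Next, I would perform the substitution $u = \Theta_{\Delta T, q}^{-1}(t)$, equivalently $t = \Theta_{\Delta T, q}(u)$. Since $\Theta_{\Delta T, q}$ is by construction the antiderivative of $K_{\Delta T, q}$ with $\Theta_{\Delta T, q}(0) = 0$, its derivative is $\Theta_{\Delta T, q}'(u) = K_{\Delta T, q}(u)$, so the Jacobian of the substitution is $dt = K_{\Delta T, q}(u)\, du$. The endpoints transform as $u = 0$ and $u = \Delta T$, the latter following from $\Theta_{\Delta T, q}(\Delta T) = \Delta T$, which is exactly the moment condition (\ref{eq:kerprop1}).

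After this substitution the integrand becomes an expression in $K_{\Delta T, q}(u)$ alone, and a final appeal to the moment condition (\ref{eq:kerprop1}) collapses the result to $\Delta T$, which cancels the $1/\Delta T$ prefactor and yields $\alpha$ as required.

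Given the elementary nature of the result, I do not expect any substantive obstacle; the only point that demands care is tracking the Jacobian of $\Theta_{\Delta T, q}^{-1}$, but this is precisely the ingredient that the construction of $\Theta_{\Delta T, q}$ as an antiderivative of $K_{\Delta T, q}$ was designed to make transparent. In essence, the proposition is a bookkeeping statement that the reparametrization by $\Theta_{\Delta T, q}^{-1}$ preserves the mean value of the mesoscopic-to-microscopic step ratio, so the overall savings factor is indeed $\alpha$.
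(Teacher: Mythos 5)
Your substitution is set up correctly, but the bookkeeping you yourself flag as ``the only point that demands care'' is exactly where the argument breaks. With $u=\Theta_{\Delta T,q}^{-1}(t)$, i.e.\ $t=\Theta_{\Delta T,q}(u)$ and $dt=K_{\Delta T,q}(u)\,du$, the integrand $K_{\Delta T,q}\bigl(\Theta_{\Delta T,q}^{-1}(t)\bigr)$ becomes $K_{\Delta T,q}(u)$, so the Jacobian does not cancel the kernel --- it duplicates it:
$$\int_0^{\Delta T}K_{\Delta T,q}\bigl(\Theta_{\Delta T,q}^{-1}(t)\bigr)\,dt=\int_0^{\Delta T}K_{\Delta T,q}(u)^2\,du .$$
The moment condition (\ref{eq:kerprop1}) controls $\int K_{\Delta T,q}$, not $\int K_{\Delta T,q}^2$, and since (\ref{eq:kerprop1}) says $K_{\Delta T,q}$ has mean $1$ on $[0,\Delta T]$ while (\ref{eq:kerprop2}) forces it to be nonconstant, Cauchy--Schwarz gives $\frac{1}{\Delta T}\int_0^{\Delta T}K_{\Delta T,q}^2\,du>1$ strictly; for the cosine kernel used in Section \ref{NUME} the value is $\tfrac{3}{2}$, so your chain of equalities would return $\tfrac{3}{2}\alpha$ rather than $\alpha$. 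The ``final appeal to the moment condition'' therefore does not close the proof.

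What the paper's (admittedly terse) proof actually does is average $h/\delta t$ in the reparametrizing variable $u$ with $t=\Theta_{\Delta T,q}(u)$, in which $h/\delta t=\alpha K_{\Delta T,q}(u)$ with no composition by $\Theta_{\Delta T,q}^{-1}$; the change of variables $s=\Theta_{\Delta T,q}(u)$, $ds=K_{\Delta T,q}(u)\,du$, then makes the Jacobian cancel the kernel, $\alpha K_{\Delta T,q}(u)\,du=\alpha\,ds$, and the result is $\alpha$ (equivalently, one applies (\ref{eq:kerprop1}) directly to $\int_0^{\Delta T}\alpha K_{\Delta T,q}(u)\,du$). This is the meaningful reading: it is the average of the step-size ratio over the steps the algorithm actually takes, which is what governs the operation count, not the average of the composite $K_{\Delta T,q}\circ\Theta_{\Delta T,q}^{-1}$ against physical time $dt$. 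So either adopt that interpretation of the integral and the identity is immediate, or keep your literal reading of (\ref{eq:mesostep}) and accept that the stated identity fails for every admissible nonconstant $K$.
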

\begin{proof}
This is a simple application of change of variables. Let $s=\Theta_{\Delta T,q}(t)$, then
$$\frac{1}{\Delta T}\int_0^{\Delta T}\frac{h(t)}{{\delta t}}dt=\frac{1}{\Delta T}\int_0^{\Delta T}\alpha{K}_{\Delta T,q}(t)\frac{ds}{{K}_{\Delta T,q}(t)}=\alpha$$
from the moment condition of $K_{\Delta T,q}$.
\end{proof}

\subsection{Higher Order Methods}
If there is a priori identification of the slow and fast variables, MSHMM may be implemented with higher order methods and this can also be applied to VSHMM. Without identification of slow and fast variables,  it is not easy to implement a higher order scheme for the slow variables. We show that for VSHMM a second order mesoscopic integrator, for example, an explicit Runge-Kutta method, gives quadratic decrease of errors with an additional error term which can be ignored in comparison with the dominating error.

Here we describe a second order approximation. We integrate the full system with higher order numerical method for $\delta t$ to resolve the fast time scale,
\begin{equation}
\nonumber \hat{x}(t)=\Phi^{\epsilon}_{\delta t}(\hat{x}^0).
\end{equation}
Then we use the second order explicit Runge-Kutta method for $f_0(\cdot)$ part without $f_1$ part in (\ref{eq:model2}),

\begin{eqnarray}
\nonumber \hat{x}^{*}&=&\hat{x}(t)+\frac{h(t)}{2}f_0(\hat{x}(t))\\
\nonumber \hat{x}(t+h(t))&=&\hat{x}(t)+h(t)f_0(\hat{x}^{*})
\end{eqnarray}

In MSHMM and FLAVORS, the amplified error dominates other error terms from mesoscopic and microscopic integrators. Because MSHMM and FLAVORS cannot control this amplified error, it is difficult to see the effect of the higher order mesoscopic integrators. With VSHMM which controls the amplified error, higher order mesoscopic integrators can be verified (see Figure \ref{fig:order} in Section \ref{NUME} for a numerical result).

\section{Analysis}\label{ANAL}

We analyze VSHMM for highly oscillatory problems. First, we start with a review of the dissipative case and address the importance of the rapid relaxation of the fast variables at the beginning of the simulation. 
The following result for the dissipative problem is from \cite{STUART}.
\begin{theorem}\cite{STUART}
Assume that for fixed $\xi$ of (\ref{eq:model1}), $\eta$ has a unique exponentially attracting fixed point, uniformly in $\xi$. Specifically we assume that there exists $\rho$ and $a>0$ such that, for all $\xi$ and all $\eta_1,\eta_2$,
$$f_1(\xi,\rho(\xi))=0,$$
$$\langle f_1(\xi,\eta _1)-f_1(\xi,\eta _2),\eta_1-\eta_2\rangle \leq -a|\eta_1-\eta_2|^2.$$
Also assume that there exists a constant $C>0$ such that
$$|f_0(\xi,\eta )|\leq C,\quad |\nabla_x f_0(\xi,\eta )|\leq C$$
$$|\nabla_y f_0(\xi,\eta )|\leq C,\quad |\eta(x)|\leq C$$
and 
$$|\nabla \eta(x)|\leq C.$$
If $\Xi(t)$ is the solution to
$$\frac{d}{dt}{\Xi}=\bar{f}(\Xi,\rho(\Xi)),\quad \Xi(0)=\xi(0),$$

Then there are constants $M,c>0$ such that
$$|\xi(t)-\Xi(t)|^2\leq ce^{Mt}(\epsilon|\eta(0)-\rho(\xi(0))|^2+\epsilon^2)$$
\end{theorem}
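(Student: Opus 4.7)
The plan is to track the deviation of $\eta$ from the quasi-stationary slow manifold $\{\eta = \rho(\xi)\}$ and then propagate that estimate into an error bound for the slow variable. I would first introduce the fast deviation $z(t) := \eta(t) - \rho(\xi(t))$ and derive its governing ODE
$$\frac{dz}{dt} = \frac{1}{\epsilon}\bigl(f_1(\xi,\rho(\xi)+z)-f_1(\xi,\rho(\xi))\bigr) - \nabla\rho(\xi)\,f_0(\xi,\eta),$$
using $f_1(\xi,\rho(\xi))=0$. Taking the inner product with $z$, applying the one-sided dissipativity hypothesis to control the stiff term, and bounding the drift by $|\nabla\rho|\,|f_0| \le C^2$ yields
$$\tfrac12\frac{d}{dt}|z|^2 \le -\frac{a}{\epsilon}|z|^2 + C|z|.$$

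Next I would absorb the cross term with Young's inequality, $C|z| \le \frac{a}{2\epsilon}|z|^2 + \frac{\epsilon C^2}{2a}$, and solve the resulting differential inequality to obtain
$$|z(t)|^2 \le e^{-at/\epsilon}|z(0)|^2 + K\epsilon^2,$$
and, more importantly for the next step,
$$\int_0^t |z(s)|^2\,ds \le \frac{\epsilon}{a}|z(0)|^2 + K\epsilon^2\, t.$$
This is precisely the source of the two contributions $\epsilon\,|\eta(0)-\rho(\xi(0))|^2$ (the transient integrated against the exponential decay on the fast time scale) and $\epsilon^2$ (the persistent residual due to the slow drift of $\rho(\xi)$) that appear in the final statement.

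For the slow variable I would set $e := \xi - \Xi$ and decompose
$$\dot e = \bigl[f_0(\xi,\eta)-f_0(\xi,\rho(\xi))\bigr] + \bigl[f_0(\xi,\rho(\xi)) - f_0(\Xi,\rho(\Xi))\bigr],$$
using that in the dissipative case the invariant measure for fixed $\xi$ is a point mass at $\rho(\xi)$, so $\bar f(\Xi) = f_0(\Xi,\rho(\Xi))$. The Lipschitz bounds on $f_0$ and on $\rho$ make the first bracket $O(|z|)$ and the second $O(|e|)$. Passing to $|e|^2$ and applying Young's inequality gives $\tfrac{d}{dt}|e|^2 \le M|e|^2 + \tilde C^2 |z|^2$, and the integral form of Gronwall combined with the integrated estimate above delivers
$$|e(t)|^2 \le c\,e^{Mt}\bigl(\epsilon\,|\eta(0)-\rho(\xi(0))|^2 + \epsilon^2\bigr),$$
which is the claimed bound.

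The main technical point I would be careful about is the $\epsilon$ versus $\epsilon^2$ asymmetry in the final estimate: the initial transient contributes only a single power of $\epsilon$ after the time integration of $e^{-as/\epsilon}$, whereas the quasi-stationary residual $K\epsilon^2$ persists and enters through a linear-in-$t$ integration that can be absorbed into the $e^{Mt}$ factor. A secondary subtlety is that $\rho$ is defined only implicitly via $f_1(\xi,\rho(\xi))=0$, so the required Lipschitz and gradient bounds on $\rho$ must be drawn either from the implicit function theorem (using the uniform coercivity in the hypothesis on $f_1$) or directly from the stated a priori bounds $|\rho|,|\nabla\rho| \le C$ given in the theorem.
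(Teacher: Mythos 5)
Your proof is correct. The paper itself gives no proof of this theorem --- it is quoted verbatim from Pavliotis and Stuart \cite{STUART} --- and your argument is essentially the standard one from that reference: estimate the fast deviation $z=\eta-\rho(\xi)$ via the one-sided dissipativity condition and Young's inequality to get $|z(t)|^2\leq e^{-at/\epsilon}|z(0)|^2+K\epsilon^2$, integrate in time to produce the $\epsilon\,|z(0)|^2+\epsilon^2 t$ bound, and feed that into a Gronwall estimate for $\xi-\Xi$ using $\bar f(\Xi)=f_0(\Xi,\rho(\Xi))$. You also correctly identified that the hypotheses $|\eta(x)|\leq C$ and $|\nabla\eta(x)|\leq C$ in the statement are really bounds on the fixed-point map $\rho$ and its gradient (a notational artifact of the quotation), which is exactly what the drift term $\nabla\rho(\xi)\,f_0(\xi,\eta)$ in your equation for $z$ requires.
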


This theorem indicates that the $\mathcal{O}(1)$ error in $\eta$ for the first time step may give an $\mathcal{O}(\epsilon)$ global error. If the $\epsilon$ value does not change at the beginning of simulation to guarantee that $\eta$ relaxes close enough to $\rho(\xi(0))$ and then increases to a larger value after relaxation, for example $\mathcal{O}(\sqrt{\epsilon})$, to get computational efficiency, the global error is still $\mathcal{O}(\epsilon)$. Therefore, it is important to have a small $\epsilon$ value at the beginning of simulation at each macro time step in VSHMM (see Figure \ref{fig:dissipative} in Section \ref{NUME} for a numerical example).

\subsection{Highly Oscillatory Case}
In the highly oscillatory case, the effect of variable mesoscopic time integration is analyzed and we show that the error is of order $\epsilon$ independent of $\alpha$, which is significantly less than MSHMM and VLAFORS for large $\alpha$ values.

Instead of regarding the new method as solving with an increased $\epsilon$, we can rescale time resulting in multiplication by a factor in the equation for the slow variables. Using the same procedure in Section \ref{MSHMMFLAVORS}, it can be verified that the explicit Euler version of the proposed method for (\ref{eq:model1}) for $0<t<T$ with (\ref{eq:mesostep}) is equivalent to solving the following modified problem,
\begin{equation}
\begin{split}\label{eq:model3}
\frac{d}{dt}{\xi}=&(1+\alpha {K}_{\Delta T,q}((1+\alpha)t))f_0(\xi,\eta ),\qquad 0\leq t\leq \frac{T}{1+\alpha}\\
\frac{d}{dt}{\eta}=&\frac{1}{\epsilon}f_1(\xi,\eta )
\end{split}
\end{equation}
where ${K}_{\Delta T,q}(t)$ satisfies (\ref{eq:kerprop1}) and (\ref{eq:kerprop2}). Note that in the formulation above, we do not have $\Theta^{-1}(t)$ as an argument of ${K}_{\Delta T,q}$ while the mesoscopic step sizes are given by (\ref{eq:mesostep}) which is
$$h(t)=\alpha{K}_{\Delta T,q}(\Theta^{-1}(t)).$$

In many oscillatory situations, $\eps{f}$ assumes special forms such as $\eps{f}(t)=\eps{f}(t,t/\epsilon)$ which are periodic in the second variable. We hypothesize that the effective force of the system can be defined by
$$\bar{f}(t)=\lim_{\delta\to 0}\left[\lim_{\epsilon\to 0}\frac{1}{\delta}\int_t^{t+\delta}\eps{f}(\tau)d\tau\right]$$
as in \cite{generalHMM} and \cite{HMM}.

Based on this hypothesis, the averaging error (\ref{eq:avgerr}) has $a=1$ in highly oscillatory problems which is $\mathcal{O}(\alpha\epsilon)$. The next theorem shows the effect of the rescaled system using the time dependent mesoscopic rescaling function. The rescaled system approximates the averaged system with an $\mathcal{O}(\epsilon)$ error term which is independent of $\epsilon$.

\begin{theorem} Let $(\xi,\eta)$ and $(\tilde{\xi},\tilde{\eta})$ be the solutions to (\ref{eq:model1}) and (\ref{eq:model3}) respectively with a savings factor $\alpha$ and $K_{\Delta T,q}$ satisfying (\ref{eq:kerprop1}) and (\ref{eq:kerprop2}) and the fast variables are periodic. Further assume that $\nabla_x f_0(x,y)$ is bounded independently of $\epsilon$. Then 
$$\|\xi(\Delta T)-\tilde{\xi}(\Delta T)\|_{\infty}\leq C_1\ep
+C_2\sum_{r=1}^q\frac{\alpha^r\epsilon^{r+1}}{{\Delta T}^{r-1}}
+C_3\frac{(\alpha\epsilon)^{q+1}}{{\Delta T}^q}.$$
where $C_i,i=1,2,3$, are constants independent of $\epsilon$ and $\alpha$.
\end{theorem}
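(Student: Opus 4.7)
The strategy is to compare both $\xi(\Delta T)$ and $\tilde\xi(\Delta T)$ to the solution $\Xi$ of the averaged equation $d\Xi/dt=\bar f(\Xi)$ with $\Xi(0)=\xi(0)$. The $C_1\epsilon$ term will come from the classical averaging estimate $\|\xi(\Delta T)-\Xi(\Delta T)\|_\infty \le C\epsilon$, which holds with $a=1$ in the periodic oscillatory case by the hypothesis (\ref{eq:avgerr}) stated just above the theorem. The remaining two terms will come from estimating $\|\tilde\xi(\Delta T)-\Xi(\Delta T)\|_\infty$ for the rescaled system (\ref{eq:model3}).

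First I would pass to the physical time $\tau(t)=\int_0^t(1+\alpha K_{\Delta T,q}((1+\alpha)s))\,ds$. By the moment condition (\ref{eq:kerprop1}), $\tau$ carries the computational window $[0,\Delta T/(1+\alpha)]$ onto $[0,\Delta T]$. In the $\tau$ variable $\tilde\xi$ satisfies $d\tilde\xi/d\tau=f_0(\tilde\xi,\tilde\eta)$, while $\tilde\eta$ feels the time-varying fast scale $\tilde\epsilon(\tau)=\epsilon(1+\alpha K_{\Delta T,q}((1+\alpha)t(\tau)))$. Thanks to the compact support of $K$ and the regularity condition (\ref{eq:kerprop2}), $\tilde\epsilon$ agrees with $\epsilon$ in a neighborhood of each endpoint $\tau=0,\Delta T$, so $\tilde\eta$ enters and leaves each macro window at the original fast rate and no endpoint transient defect larger than $C\epsilon$ is created.

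Next I would decompose $f_0(\tilde\xi,\tilde\eta)=\bar f(\tilde\xi)+R(\tilde\xi,\tilde\eta)$ with $\int R(\tilde\xi,\eta)\,d\mu_{\tilde\xi}(\eta)=0$, and solve the Poisson-type equation $f_1\cdot\nabla_\eta G=R$ at frozen $\tilde\xi$; along trajectories this yields the identity $R=\epsilon\bigl(dG/dt-\nabla_\xi G\cdot\dot{\tilde\xi}\bigr)$. Substituting this into the fluctuating forcing integral and integrating by parts against $1+\alpha K_{\Delta T,q}((1+\alpha)t)$, each integration by parts transfers a derivative onto $K_{\Delta T,q}$ (picking up a factor $(1+\alpha)/\Delta T$ from the rescaling of $K$) and replaces $R$ by a new mean-zero fluctuation after extracting another factor of $\epsilon$ via the Poisson trick. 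The boundary contributions vanish in the first $q$ iterations thanks to (\ref{eq:kerprop2}). Carrying out $r$ iterations produces a bulk term of order $\alpha^r\epsilon^{r+1}/\Delta T^{r-1}$, giving the sum in the bound, and terminating at $r=q+1$ leaves a remainder of order $(\alpha\epsilon)^{q+1}/\Delta T^q$.

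The main obstacle will be the careful accounting of powers of $\alpha$, $1+\alpha$, $\epsilon$, and $\Delta T$ through the iterated integration by parts. In particular, the correction $\nabla_\xi G\cdot\dot{\tilde\xi}$ produces an additional factor proportional to $1+\alpha$ from $\dot{\tilde\xi}$ during the mesoscopic regime, so it cannot be bounded crudely and must itself be unfolded iteratively in parallel with the primary expansion — this is exactly what forces the $\alpha^r$ (rather than a smaller power) in the $r$-th bulk term. Once the forcing estimate is in hand, the uniform bound on $\nabla_x f_0$ assumed in the theorem lets me apply Gronwall to convert it into the claimed $L^\infty$ bound on $\tilde\xi(\Delta T)-\Xi(\Delta T)$, and combining with the averaging estimate for $\xi-\Xi$ by the triangle inequality yields the theorem.
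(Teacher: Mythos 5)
Your proposal is correct and shares the paper's overall architecture: you compare $\xi$ and $\tilde\xi$ through the averaged solutions, use the moment condition (\ref{eq:kerprop1}) to show the averaged solutions coincide after the time rescaling, and then reduce everything to an oscillatory integral of the mean-zero fluctuation against $\mathcal{K}(t)=1+\alpha K_{\Delta T,q}((1+\alpha)t)$, which you attack by repeated integration by parts, gaining a factor of $\epsilon$ per step and paying a factor of roughly $\alpha/\Delta T$ from each derivative falling on $\mathcal{K}$, with boundary terms killed by (\ref{eq:kerprop2}). The one genuine difference is the engine for extracting the powers of $\epsilon$: the paper (Lemma \ref{lem:thmosc1}) partitions $[0,\Delta T/(1+\alpha)]$ into subintervals of length $\epsilon$, freezes the slow time at midpoints via Taylor expansion, and integrates by parts interval-by-interval using explicit mean-zero periodic antiderivatives $g^{[r]}$ in the fast variable, then controls the telescoping boundary sum with Lemma \ref{lem:thmosc2}; you instead solve the Poisson-type equation $f_1\cdot\nabla_\eta G=R$ at frozen slow variable and integrate by parts globally, which in the periodic setting constructs exactly the same mean-zero antiderivative but avoids the discrete partition and the summation-by-parts bookkeeping. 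The two are equivalent in substance, and your version is arguably cleaner; it is also more careful on one point you explicitly flag, namely that the corrector's slow drift involves $\dot{\tilde\xi}=\mathcal{K}f_0=\mathcal{O}(\alpha)$ during the mesoscopic regime, so the increment of the corrector over an $\epsilon$-subinterval is naturally $\mathcal{O}(\alpha\epsilon)$ and must be unfolded iteratively rather than bounded crudely --- the paper's Lemma \ref{lem:thmosc2} records this increment simply as $\mathcal{O}(\epsilon)$, so your accounting here is, if anything, the more honest route to the stated powers of $\alpha$.
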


\begin{proof}
We use notation $\mathcal{K}(t)$ to denote $(1+\alpha {K}_{\Delta T,q}((1+\alpha)t))$ to simplify the argument.
First, for fixed $\xi$ and $\tilde{\xi}$, $\eta$ and $\tilde{\eta}$ have the same invariant measure. If we denote the averaged solutions to (\ref{eq:model1}) and (\ref{eq:model3}) by $\Xi$ and $\tilde{\Xi}$ respectively, they satisfy
\begin{equation}\label{eq:avg1}\frac{d}{dt}{\Xi}=\bar{f}(\Xi),\qquad 0\leq t\leq \Delta T\end{equation}
and
\begin{equation}\label{eq:avg2}\frac{d}{dt}{\tilde{\Xi}}=\mathcal{K}(t)\bar{f}(\tilde{\Xi}),\qquad 0\leq t\leq \frac{\Delta T}{1+\alpha}.\end{equation}
$\mathcal{K}(t)$ satisfies
\begin{equation}\label{eq:kerprop3}
\begin{split}
\int_{0}^{\frac{\Delta T}{1+\alpha}}\mathcal{K}(t)dt=&\int_0^{\frac{\Delta T}{1+\alpha}}\left(1+\alpha{K}_{\Delta T,q}((1+\alpha)t)\right)dt\\
=&\frac{\Delta T}{1+\alpha}+\frac{\alpha}{1+\alpha}\int_0^{\Delta T} {K}_{\Delta T,q}(s)ds\qquad \textrm{with } s=(1+\alpha)t\\
=&\Delta T\quad
\end{split}
\end{equation}
For (\ref{eq:avg2}), by using (\ref{eq:kerprop3}) and change of time $\tau$ such that $d\tau/dt={K}(t)$, we can verify that
$$\Xi(\Delta T)=\tilde{\Xi}(\frac{\Delta T}{1+\alpha}).$$

Now we compare $\tilde{\Xi}(\frac{\Delta T}{1+\alpha})$ and $\tilde{\xi}(\frac{\Delta T}{1+\alpha})$. First,
$$\tilde{\xi}\left(\frac{\Delta T}{1+\alpha}\right)=\int_{0}^{\frac{\Delta T}{1+\alpha}}\mathcal{K}(t)f_0(\tilde{\xi},\tilde{\eta} )dt.$$
Let $g_0(t,t/\epsilon):=f_0(\tilde{\xi}(t),\tilde{\eta}(t))$ and $\bar{f}(t)=\int g_0(t,s)ds$ where $g_0$ is 1-periodic in the second variable. Then
$$\tilde{\Xi}\left(\frac{\Delta T}{1+\alpha}\right)=\int_{0}^{\frac{\Delta T}{1+\alpha}}\mathcal{K}(t)\bar{f}(t)dt.$$

Therefore we have
$$\tilde{\xi}\left(\frac{\Delta T}{1+\alpha}\right)-\tilde{\Xi}\left(\frac{\Delta T}{1+\alpha}\right)=\int_{0}^{\frac{\Delta T}{1+\alpha}}\mathcal{K}(t)g_1(t,t/\epsilon)dt$$
where 
$$g_1(t,t/\epsilon)=g_0(t,t/\epsilon)-\bar{f}(t).$$
Using Lemma \ref{lem:thmosc1}, we prove the theorem for the case when $g_0(t,t/\epsilon)$ is periodic in the second variable.
\end{proof}

\begin{lemma}\label{lem:thmosc1}
$$\left\|\int_0^{\frac{\Delta T}{1+\alpha}}\mathcal{K}(t)g_1(t,t/\epsilon)dt\right\|_{\infty}\leq
C_1\ep
+C_2\sum_{r=1}^q\frac{\alpha^r\epsilon^{r+1}}{{\Delta T}^{r-1}}
+C_3\frac{(\alpha\epsilon)^{q+1}}{{\Delta T}^q}.$$
where $C_i,i=1,2,3$ are constants independent of $\epsilon,\alpha$ and $\Delta T$.
\end{lemma}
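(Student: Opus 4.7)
The approach is Fourier-based. Since the fast variables are periodic and $g_1(t,\tau) := g_0(t,\tau) - \bar f(t)$ has zero mean in $\tau$, I will expand $g_1$ in a Fourier series in its fast argument and handle each mode separately by iterated integration by parts against the highly oscillatory exponential, exploiting the vanishing moment and regularity conditions on $K_{\Delta T,q}$.

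\textbf{Step 1 (setup).} First substitute $s = (1+\alpha)t$, so that the integral becomes
\[
\int_{0}^{\Delta T} \frac{1 + \alpha K_{\Delta T,q}(s)}{1+\alpha}\, g_1\!\left(\tfrac{s}{1+\alpha},\,\tfrac{s}{\epsilon'}\right) ds,\qquad \epsilon' := (1+\alpha)\epsilon,
\]
and split the weight additively into a constant baseline $1/(1+\alpha)$ (call the resulting integral $I_1$) and a kernel piece $\alpha K_{\Delta T,q}(s)/(1+\alpha)$ (call it $I_2$). Then expand
$g_1(t,\tau) = \sum_{k\neq 0} c_k(t)\, e^{2\pi i k \tau}$,
the sum starting at $k\neq 0$ by the mean-zero hypothesis, and assume enough regularity of $g_0$ in its fast variable so that every Fourier sum that appears below is absolutely convergent.

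\textbf{Step 2 (bounding $I_1$).} Integrate by parts once in $s$ against each oscillatory factor $e^{2\pi i k s/\epsilon'}$. The boundary contribution carries a factor $\epsilon'/(2\pi|k|)$, which cancels against the $1/(1+\alpha)$ prefactor to leave $\epsilon/(2\pi|k|)$; summing absolutely over $k \neq 0$ produces a term of size $C_1\epsilon$. The remaining volume term is of order $\epsilon\,\Delta T/(1+\alpha)$ and is absorbed into $C_1\epsilon$.

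\textbf{Step 3 (bounding $I_2$).} For the kernel piece, integrate by parts $q+1$ times against $e^{2\pi i k s/\epsilon'}$. Each IBP contributes a factor $\epsilon'/(2\pi i k)$, while the regularity conditions $K_{\Delta T,q}^{(r)}(0)=K_{\Delta T,q}^{(r)}(\Delta T)=0$ for $r=0,\ldots,q$ guarantee that every boundary term vanishes. What remains is the volume integral of $e^{2\pi i k s/\epsilon'}$ against $\frac{d^{q+1}}{ds^{q+1}}\bigl[K_{\Delta T,q}(s)\,c_k(s/(1+\alpha))\bigr]$, with an overall prefactor $\tfrac{\alpha}{1+\alpha}\bigl(\tfrac{\epsilon'}{2\pi k}\bigr)^{q+1}$.

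\textbf{Step 4 (Leibniz bookkeeping).} Expanding the $(q+1)$-st derivative by Leibniz gives a sum over $j=0,\ldots,q+1$, where $j$ counts how many derivatives fall on $c_k(s/(1+\alpha))$ (each costing $1/(1+\alpha)$) versus on $K_{\Delta T,q}(s)$ (each contributing to $\|K_{\Delta T,q}^{(m)}\|_{L^1([0,\Delta T])}\sim \Delta T^{1-m}$ for $m\geq 1$, with $\|K_{\Delta T,q}\|_{L^1}=\Delta T$). Re-indexing by $r := q+1-j$ and simplifying $\alpha(1+\alpha)^{r-1}\sim\alpha^r$ under the convergence hypothesis $(1+\alpha)\epsilon\ll 1$, one finds: the extremal case $j=0$ (all $q+1$ derivatives on $K$) contributes the $(\alpha\epsilon)^{q+1}/\Delta T^q$ term; mixed cases $1 \leq r \leq q$ yield contributions bounded (after using $\epsilon \leq 1$ to replace the common factor $\epsilon^{q+1}$ by the weaker $\epsilon^{r+1}$) by $\alpha^r\epsilon^{r+1}/\Delta T^{r-1}$; and the case $j=q+1$ is of order $\epsilon^{q+1}\Delta T$ and is absorbed.

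\textbf{Main obstacle.} The delicate part is the Leibniz bookkeeping of Step 4: each of the $q+2$ Leibniz terms must be tracked separately in powers of $\alpha$, $\epsilon$, and $\Delta T$, and the choice of weakening $\epsilon^{q+1}$ to $\epsilon^{r+1}$ for intermediate $r$ is what yields the clean, summable form in the statement rather than a tighter but uglier bound. A secondary technical point is confirming that the Fourier sum $\sum_{k\neq 0}\|c_k^{(j)}\|_\infty/|k|^{q+1}$ converges for $j\leq q+1$, which amounts to an implicit smoothness assumption on $g_0$ in its periodic argument beyond what is spelled out in the theorem hypotheses.
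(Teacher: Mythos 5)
Your proof is correct in substance but takes a genuinely different route from the paper's. The paper does not use Fourier series: it partitions $[0,\Delta T/(1+\alpha)]$ into subintervals of length $\epsilon$, freezes the slow argument of $g_1$ at the midpoint of each subinterval by Taylor expansion (this is where its $C_1\epsilon$ term comes from), and then integrates by parts against successive zero-mean antiderivatives $g^{[1]},g^{[2]},\dots$ of the periodic fast profile, controlling the resulting boundary contributions by an Abel-type summation by parts across subintervals together with a separate lemma showing $g^{[1]}(t_{n+1/2},s)-g^{[1]}(t_{n-1/2},s)=\mathcal{O}(\epsilon)$. Both arguments rest on the same engine --- each integration by parts in the fast variable trades a factor $\epsilon'$ against one derivative on the kernel, with $\|\mathcal{K}^{(r)}\|_\infty=\mathcal{O}(\alpha^{r+1}/\Delta T^{r})$ and the endpoint conditions $K^{(r)}(0)=K^{(r)}(\Delta T)=0$ for $r\le q$ killing the boundary terms --- and your Leibniz bookkeeping in Step 4 reproduces exactly the hierarchy $\alpha^{r}\epsilon^{r+1}/\Delta T^{r-1}$ that the paper obtains by iterating its local argument; your identification of the extremal case $j=0$ with the $(\alpha\epsilon)^{q+1}/\Delta T^{q}$ term and the deliberate weakening of $\epsilon^{q+1}$ to $\epsilon^{r+1}$ for the mixed cases both match. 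What your route buys is a cleaner global computation with no partition, no frozen-argument lemma, and no discrete summation by parts, plus a transparent split of the weight into the constant baseline $1/(1+\alpha)$ (source of the irreducible $C_1\epsilon$) and the kernel piece (source of the $\alpha$-dependent tail), which the paper leaves implicit. What it costs is the extra regularity you yourself flag: you need $\sum_{k\neq 0}\|c_k^{(j)}\|_\infty|k|^{-(q+1)}<\infty$ for $j\le q+1$ (and already $\sum_{k\neq 0}\|c_k\|_\infty/|k|<\infty$ in Step 2), i.e., genuine smoothness of $g_0$ in its periodic argument, whereas the paper's antiderivative construction needs only boundedness, periodicity and zero mean of $g_1(t_{n+1/2},\cdot)$ together with Lipschitz dependence on the slow argument. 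For the smooth examples treated in the paper this is harmless, but it should be stated as a hypothesis if you present the Fourier version.
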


\begin{proof}
Partition the interval, $(0,\frac{\Delta T}{1+\alpha})$, into $N$ uniform subintervals, $(t_n,t_{n+1}), n=0,1,...,N-1$ such that
$$t_0=0,\quad t_N=\frac{\Delta T}{1+\alpha}$$
and
$$|t_{n+1}-t_n|=\epsilon.$$
This condition requires that $N=\frac{\Delta T}{(1+\alpha)\epsilon}$.

For $t_n\leq t\leq t_{n+1}, n=0,1,...,N-1$, using Taylor series expansion of $g_1(\cdot,\cdot)$ in the first variable at $t=t_{n+1/2}=\frac{t_{n+1}+t_{n}}{2}$, we have
$$g_1(t,t/\epsilon)=g_1(t_{n+1/2},t/\epsilon)+\partial_1 g_1(t_{n+1/2},t/\epsilon)(t-t_{n+1/2})+\mathcal{O}(\epsilon^2).$$

$\|\partial_1g\|_{\infty}$ is bounded and independent of $\alpha$ and $\epsilon$. The second term gives $\ep$ order term and after integration on $[t_n,t_{n+1}]$, it becomes $\ep^2$ order. There are $N=\frac{\Delta T}{(1+\alpha)\ep}$ intervals, therefore, we have
$$\int_0^{\frac{\Delta T}{1+\alpha}}\mathcal{K}(t)g_1(t,t/\epsilon)dt=\sum_{n=0}^{N-1}\left[\int_{t_n}^{t_{n+1}}\mathcal{K}(t)g_1(t_{n+1/2},t/\epsilon)dt\right]+\mathcal{O}(\Delta T\epsilon)$$

We further analyze the first term on the right hand side using integration by parts, which gives
\begin{equation}\label{eq:lem2pf1}
\sum_{n=0}^{N-1}\left\{ \epsilon\mathcal{K}(t)g^{[1]}(t_{n+1/2},t/\epsilon)\Big|_{t_n}^{t_{n+1}}
-\int_{t_n}^{t_{n+1}}\epsilon \mathcal{K}'(t)g^{[1]}(t_{n+1/2},t/\epsilon)dt\right\}
\end{equation}
where $g^{[1]}(t_{n+1/2},s)$ is an antiderivative of $g_1(t_{n+1/2},s)$ such that $\int g^{[1]}(t_{n+1/2},s)ds=0$. From this mean zero condition $g^{[1]}(t_{n+1/2},s)$ is periodic in $s$.

For the first term of (\ref{eq:lem2pf1}), after rearrangement of the summation, 
\begin{equation}\label{eq:lem2pf2}
\begin{split}
\sum_{n=0}^{N-1} \epsilon\mathcal{K}(t)g^{[1]}(t_{n+1/2},t/\epsilon)\Big|_{t_n}^{t_{n+1}} =\epsilon&\times\left\{\mathcal{K}(t_N)g^{[1]}(t_{N-1/2},\frac{t_{N}}{\epsilon})\right.\\
&-\sum^{N-1}_{n=1} \mathcal{K}(t_n)\left(g^{[1]}(t_{n+1/2},\frac{t_n}{\epsilon})-g^{[1]}(t_{n-1/2},\frac{t_n}{\epsilon})\right)\\
&\left.-\mathcal{K}(t_0)g^{[1]}(t_{1/2},\frac{t_0}{\epsilon})\right\}
\end{split}
\end{equation}

Using Lemma \ref{lem:thmosc2} with the facts that $\|\mathcal{K}\|_{\infty}=\mathcal{O}(\alpha)$ and $N=\mathcal{O}(\frac{\Delta T}{\alpha\epsilon})$, we estimate the second term of (\ref{eq:lem2pf2}),
$$\epsilon\sum_{n=1}^{N-1} \mathcal{K}(t_n)\left(g^{[1]}(t_{n+1/2},\frac{t_n}{\epsilon})-g^{[1]}(t_{n-1/2},\frac{t_n}{\epsilon})\right)=\mathcal{O}(\Delta T\epsilon).$$
For $t=t_0$ and $t_N$, $\mathcal{K}(t)=1$, and we have $\mathcal{O}(\epsilon)$ for the first and last terms of (\ref{eq:lem2pf2}).

For the second term of (\ref{eq:lem2pf1}), we do the same procedure except that now we have
$$\|\mathcal{K}'(t)\|_{\infty}=\mathcal{O}(\frac{\alpha^2}{\Delta T})$$
$$\mathcal{K'}(t)=0\quad \textrm{for }t=t_0,t_N,$$
and
$$g^{[1]}(t_{n+1/2},s)-g^{[1]}(t_{n-1/2},s)=\mathcal{O}(\epsilon).$$

After integration by parts, we have
$$\sum_{n=0}^{N-1}\left\{-\int_{t_n}^{t_{n+1}}\epsilon \mathcal{K}'(t)g^{[1]}(t_{n+1/2},t/\epsilon)dt\right\}=\sum_{n=0}^{N-1}\int_{t_n}^{t_{n+1}} \epsilon^2\mathcal{K}''(t)g^{[2]}(t_{n+1/2},t/\epsilon)dt+\mathcal{O}(\alpha \epsilon^2)$$
where $g^{[2]}(t_{n+1},s)$ is an antiderivative of $g^{[1]}(t_{n+1},s)$ such that $\int g^{[2]}(t_{n+1/2},s)ds=0$.
Now $L_{\infty}$ estimate of the right hand side is
$$\mathcal{O}(N\times \epsilon \times \epsilon^2\times \frac{\alpha^3}{\Delta T^2})=\mathcal{O}(\frac{\alpha^2\epsilon^2}{\Delta T})$$
If $d^r \mathcal{K}(t)/dt^r=0$ for $t=t_0,t_N$, and $r\leq q$, we can repeat the same procedure and this proves the lemma.
\end{proof}

\begin{lemma}\label{lem:thmosc2}
$$g^{[1]}(t_{n+1/2},s)-g^{[1]}(t_{n-1/2},s)=\mathcal{O}(\epsilon)$$
\end{lemma}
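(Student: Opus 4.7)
The plan is to read the inequality as a mean value theorem bound in the first argument, using that $t_{n+1/2}-t_{n-1/2}=\epsilon$ by construction of the partition. Once one has a uniform bound $\|\partial_1 g^{[1]}(\cdot,s)\|_\infty\le C$, the lemma follows immediately as
\begin{equation*}
|g^{[1]}(t_{n+1/2},s)-g^{[1]}(t_{n-1/2},s)|\le C\,|t_{n+1/2}-t_{n-1/2}|=C\epsilon.
\end{equation*}

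To obtain such a bound I would write the normalized antiderivative explicitly,
\begin{equation*}
g^{[1]}(t,s)=\int_0^s g_1(t,u)\,du-\int_0^1\!\!\int_0^v g_1(t,u)\,du\,dv,
\end{equation*}
as the unique mean-zero primitive of $g_1(t,\cdot)$. Since $\partial_t$ commutes with the inner $u$ and $v$ integrals, one gets $\|\partial_1 g^{[1]}\|_\infty\le 2\|\partial_1 g_1\|_\infty$, which reduces matters to a uniform bound on $\partial_t g_1$. Under the two-scale periodic ansatz $g_0(t,s)=f_0(\tilde{\xi}(t),Y(t,s))$ with $Y$ one-periodic and smooth in the first slot, the chain rule gives
\begin{equation*}
\partial_1 g_1(t,s)=\partial_x f_0(\tilde{\xi},Y)\,\tilde{\xi}'(t)+\partial_y f_0(\tilde{\xi},Y)\,\partial_t Y(t,s)-\bar f'(t),
\end{equation*}
and the theorem's hypothesis that $\nabla_x f_0$ is bounded independently of $\epsilon$, together with corresponding smoothness of $Y$ and $\bar f$, controls each factor.

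The main obstacle is handling $\tilde{\xi}'(t)$. In (\ref{eq:model3}) one has $\tilde{\xi}'(t)=\mathcal{K}(t)\,f_0(\tilde{\xi},\tilde{\eta})$ with $\|\mathcal{K}\|_\infty=\mathcal{O}(\alpha)$, so a naive pointwise bound yields $\mathcal{O}(\alpha\epsilon)$ rather than the advertised $\mathcal{O}(\epsilon)$. The way I would recover the stated estimate is to pass to the rescaled time $d\tau=\mathcal{K}(t)\,dt$, in which $\tilde{\xi}$ becomes a genuine slow variable with $\tau$-derivative of unit size, and then translate the estimate back to the $t$ variable used by the partition; equivalently, one uses the fact that $g_1$ is mean-zero in the fast argument and that $\tilde{\eta}$ nearly completes a full period over an interval of length $\epsilon$, so that the $\tilde{\xi}$-driven piece of the difference cancels to leading order against the mean-zero normalization of $g^{[1]}$. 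Making this cancellation precise, and thereby ensuring the constant in the lemma is independent of $\alpha$ as the statement of Lemma \ref{lem:thmosc1} requires, is where I expect to spend the bulk of the effort.
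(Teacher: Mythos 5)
Your reduction is exactly the paper's argument for this lemma: the paper also writes $g^{[1]}(t_*,s)=\int_0^s g_1(t_*,\tau)\,d\tau-C(t_*)$ with $C(t_*)=\int_0^1\int_0^s g_1(t_*,\tau)\,d\tau\,ds$, subtracts the two expressions, and concludes from the single assertion $g_1(t_{n+1/2},\tau)=g_1(t_{n-1/2},\tau)+\mathcal{O}(\epsilon)$ --- which is precisely the uniform bound on $\partial_1 g_1$ times $|t_{n+1/2}-t_{n-1/2}|=\epsilon$ that your mean-value-theorem formulation requires (the same assertion appears in the proof of Lemma \ref{lem:thmosc1} as ``$\|\partial_1 g\|_\infty$ is bounded and independent of $\alpha$ and $\epsilon$''). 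Up to that point your proposal and the paper's proof coincide in substance, differing only in whether the Lipschitz bound is applied to $g_1$ under the integral or to $g^{[1]}$ directly.

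The difference is your last paragraph, and it cuts both ways. You are right that the $\alpha$-uniformity of $\|\partial_1 g_1\|_\infty$ is the only nontrivial content of the lemma: since $\tilde{\xi}'=\mathcal{K}(t)f_0(\tilde{\xi},\tilde{\eta})$ with $\|\mathcal{K}\|_\infty=\mathcal{O}(\alpha)$, the naive chain-rule bound gives $\mathcal{O}(\alpha\epsilon)$ for the increment, not $\mathcal{O}(\epsilon)$; the paper does not discuss this point and states the bound without justification. But your proposal does not close the gap either: both repairs are left as intentions, and the first (passing to $d\tau=\mathcal{K}\,dt$) does not obviously work, because the partition $\{t_n\}$ is uniform in $t$ --- chosen so that the fast phase completes whole periods on each cell --- and on an interior cell where $\mathcal{K}\sim\alpha$ the slow variable genuinely moves by $\mathcal{O}(\alpha\epsilon)$; rescaling time relabels that displacement but does not shrink it. Any actual fix must exploit either sign cancellation across the sum in (\ref{eq:lem2pf2}) or an integral (rather than supremum) bound on $\mathcal{K}$ via the moment condition (\ref{eq:kerprop3}), and that is exactly the step you defer. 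So: same route as the paper, with an honest flag on the one step the paper asserts without proof, but as a standalone argument the proposal is incomplete at precisely that step.
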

\begin{proof}
For $t_{n-1/2}\leq t_*\leq t_{n+1/2}$,
$$g^{[1]}(t_*,s)=\int_0^s g_1(t_*,\tau)d\tau-C(t_*)$$
where constant $C(t_*)$ is given by
$$C(t_*)=\int_0^1\int_0^s g_1(t_*,\tau)d\tau ds.$$

Now
\begin{equation}
\begin{split}
\nonumber g^{[1]}(t_{n+1/2},s)-g^{[1]}(t_{n-1/2},s)=&\int_0^{s}\left(g_1(t_{n+1/2},\tau)-g_1(t_{n-1/2},\tau)\right)d\tau\\
&+C(t_{n+1/2})-C(t_{n-1/2})
\end{split}
\end{equation}

Using 
$$g_1(t_{n+1/2},\tau)=g_1(t_{n-1/2},\tau)+\mathcal{O}(\epsilon),$$
we have 
$$\int_0^{s}\left(g_1(t_{n+1/2},\tau)-g_1(t_{n-1/2},\tau)\right)d\tau=\mathcal{O}(\epsilon)$$
Also for the constant terms, 
\begin{equation}
\begin{split}
\nonumber C(t_{n+1/2})-C(t_{n-1/2})=&\int_0^1\int_0^{s}\left(g_1(t_{n+1/2},\tau)-g_1(t_{n-1/2},\tau)\right)d\tau ds\\
 =&\mathcal{O}(\epsilon)
\end{split}
\end{equation}
This proves the Lemma.
\end{proof}

\section{Numerical Experiments}\label{NUME}
In this section, we apply VSHMM to dissipative and highly oscillatory problems. The result of VSHMM is compared with MSHMM and FLAVORS. The convergence of the second order scheme is also verified for (\ref{eq:model2}) which does not have a priori identification of slow and fast variables. As the last test problem, VSHMM is applied to a stellar orbit problem \cite{stellar,stellar1,stellar2}. The comparison of three methods, MSHMM, FLAVORS, and VSHMM shows that VSHMM captures slow variables with higher accuracy than MSHMM and FLAVORS. 

In choosing $K$ for the variable mesoscopic time step, the $L_{\infty}$ norm of $K$ plays more important role than the regularity conditions at the boundary. The error depending on the regularity of $K$ has the form of powers of $(\alpha\epsilon)$ and the restriction (\ref{eq:alphacond}) for choosing $\alpha$ shows that if $q\geq 1$, then this error is small. On the other hand, if $\|K\|_{\infty}$ is too large, then instantaneous value of $h(t)+\delta t$ at some time violates the condition (\ref{eq:alphacond}) and this violation deteriorate the convergence of VSHMM. In all numerical examples, we use $K(t)=\left(1+\cos({2\pi (t-1/2)})\right)$ which has $\|K\|_{\infty}=2$ and $q=1$. Also, the micro steps for all methods are same. Therefore, the computational efficiency between DNS, MSHMM, FLAVORS, and VSHMM follows (\ref{eq:efficiency}).

\subsection{Stiff Dissipative Case}
We begin with a dissipative example to show the effect of resolving the transient behavior accurately.

The following problem has explicit form of slow and fast variables,
	\begin{equation}\label{eq:dis}
	\begin{split}
	\frac{d}{dt}{\xi}=&1+\frac{\xi+\eta}{2},\qquad 0< t< 1\\
	\frac{d}{dt}{\eta}=&\frac{\xi-\eta}{\epsilon}
	\end{split}
	\end{equation}
	with initial value $(x(0),y(0))=(-1,1)$.
	
We can verify that the averaged equation is given by
	\begin{equation}
	\frac{d\Xi}{dt}=1+\Xi, \qquad\Xi(0)=-1
	\end{equation}

	\begin{figure}
		\begin{center}
		\includegraphics[width=6cm]{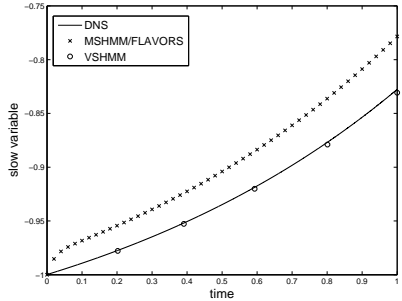}
		\caption{Dissipative case (\ref{eq:dis}). Plot of the slow variable $\xi$ of (\ref{eq:dis}) computed by DNS, MSHMM, FLAVORS and VSHMM. $\epsilon=2\times 10^{-4}$ and $\alpha=100$ for VSHMM, MSHMM and FLAVORS.}\label{fig:dissipative}
		\end{center}
	\end{figure}
	
	Figure \ref{fig:dissipative} shows the numerical result from the various methods for  $\epsilon=2\times 10^{-4}$ and $\alpha=100$. For VSHMM, MSHMM and FLAVORS, a fourth order Runge-Kutta method is used for full scale integrator while a second order Runge-Kutta method is used for mesoscopic integration. In this case, MSHMM and FLAVORS produce errors greater than the error from VSHMM because of the error from inappropriately resolved transient behavior at the beginning.
VSHMM approximates the slow variables more accurately than MSHMM and VSHMM at coarse time points with a time step $\Delta T=0.2$.

\subsection{Highly Oscillatory Case}
The next three numerical experiments are highly oscillatory problems where the eigenvalues of the Jacobian of $f_1$ are all imaginary. 
The first two problems are expanding spiral problems. In the first case, the fast variable has a fixed angular period while in the second problem, the fast variable has variable periods depending on the slow variable and fully nonlinear. The last problem is a well studied system taken from the theory of stellar orbits in a galaxy \cite{stellar,stellar1,stellar2}.

\subsubsection{Constant Angular Period Case}
The first oscillatory case is an expanding spiral problem for complex $x$ with constant angular period. The equation is given by
	\begin{equation}\label{eq:const}
		\frac{dx}{dt}=\frac{x}{4}+\frac{5Real(x)x}{|x|}+\frac{ix}{\epsilon},\qquad x\in\mathbb{C}
	\end{equation}
	$$x(0)=1.$$
The slow and fast variables are $\xi=|x|$ and $\eta=\arg(x)$ respectively and it can be verified that
\begin{equation}\label{eq:op}
\begin{split}
\frac{d\xi}{dt}=&\frac{\xi}{4}+5\xi\cos(\eta)\\
\frac{d\eta}{dt}=&\frac{1}{\epsilon}\\
(\xi(0),\eta(0))=&(1,0)
\end{split}
\end{equation}
which has constant periodic force for $\xi$.

	\begin{figure}
		\begin{center}
		\subfloat[global]{\includegraphics[width=7cm]{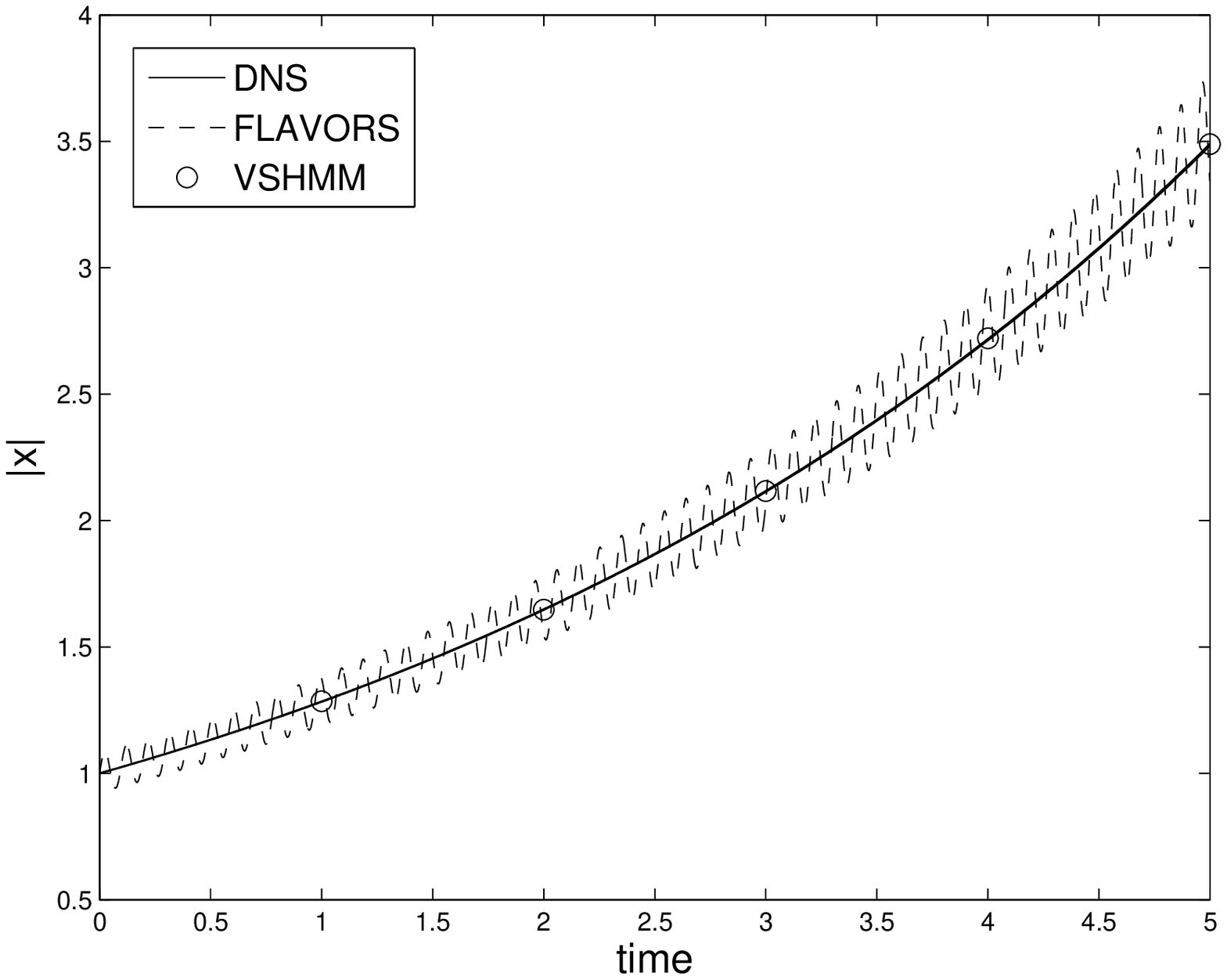}}
		\subfloat[local]{\includegraphics[width=7cm]{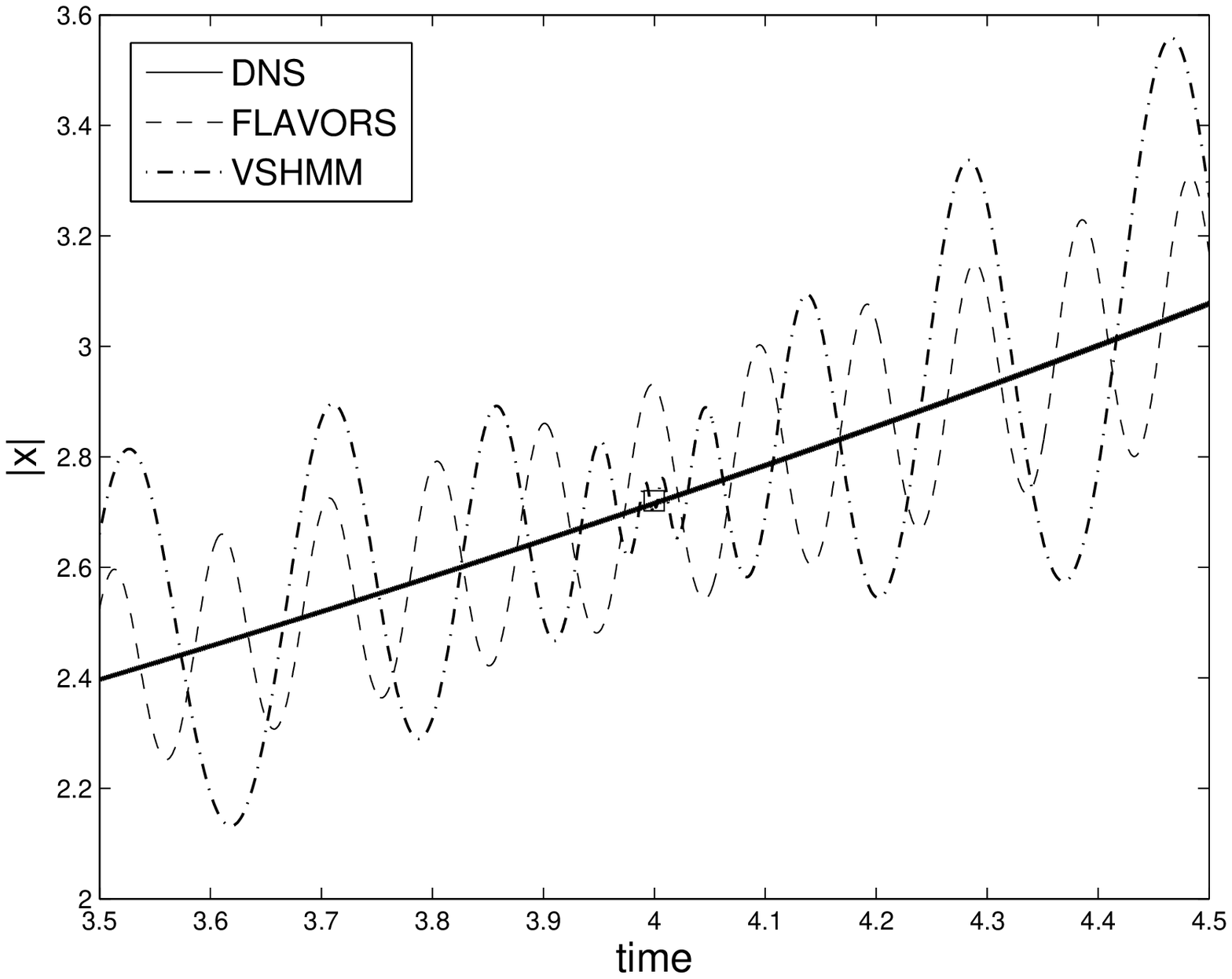}\label{fig:local}}
		\caption{Constant angular period case (\ref{eq:const}). Plot of the slow variable $\xi=|x|$ of (\ref{eq:op}) computed by DNS, FLAVORS and VSHMM. Global (left) and local (right) representations of each solution in [0,3]. $\epsilon=\frac{1}{3400}$ and $\alpha=50$ for VSHMM and FLAVORS.}\label{fig:op}
		\end{center}
	\end{figure}
Figure \ref{fig:op} shows the slow variable over time with a locally magnified plot in the neighborhood of $t=4$. As shown in Figure \ref{fig:local}, the proposed method, like FLAVORS, has amplified oscillations except in the neighborhood of the specified macro time interval. Once it reaches the neighborhood of the macro time interval, the new method has less oscillations and converges faster to the averaged solution.

Next we verify the order of accuracy for the second order mesoscopic integration methods. In VSHMM, there are several error terms from various factors - order of accuracy of each integrator and regularity conditions and the $L_{\infty}$ norm of $K$ for variable stepping. To see the second order behavior, other error terms must be well controlled to be much smaller than the error of the second order integration. For this purpose, we choose $\alpha=52.67$ which was obtained from numerical tests. Figure \ref{fig:order} shows the errors of the first and second order methods with the analytic effective solution. For sufficiently small average mesoscopic step size, $\alpha \delta t$, the first order method shows linear decrease of error as expected while the second order method shows quadratic decrease of error for relatively large mesoscopic step sizes. The error of the second order method becomes flat for much smaller mesoscopic step sizes because the error is dominated by the averaging error which is of order $\epsilon$.

	\begin{figure}
		\begin{center}
		\subfloat[]{\includegraphics[width=7cm]{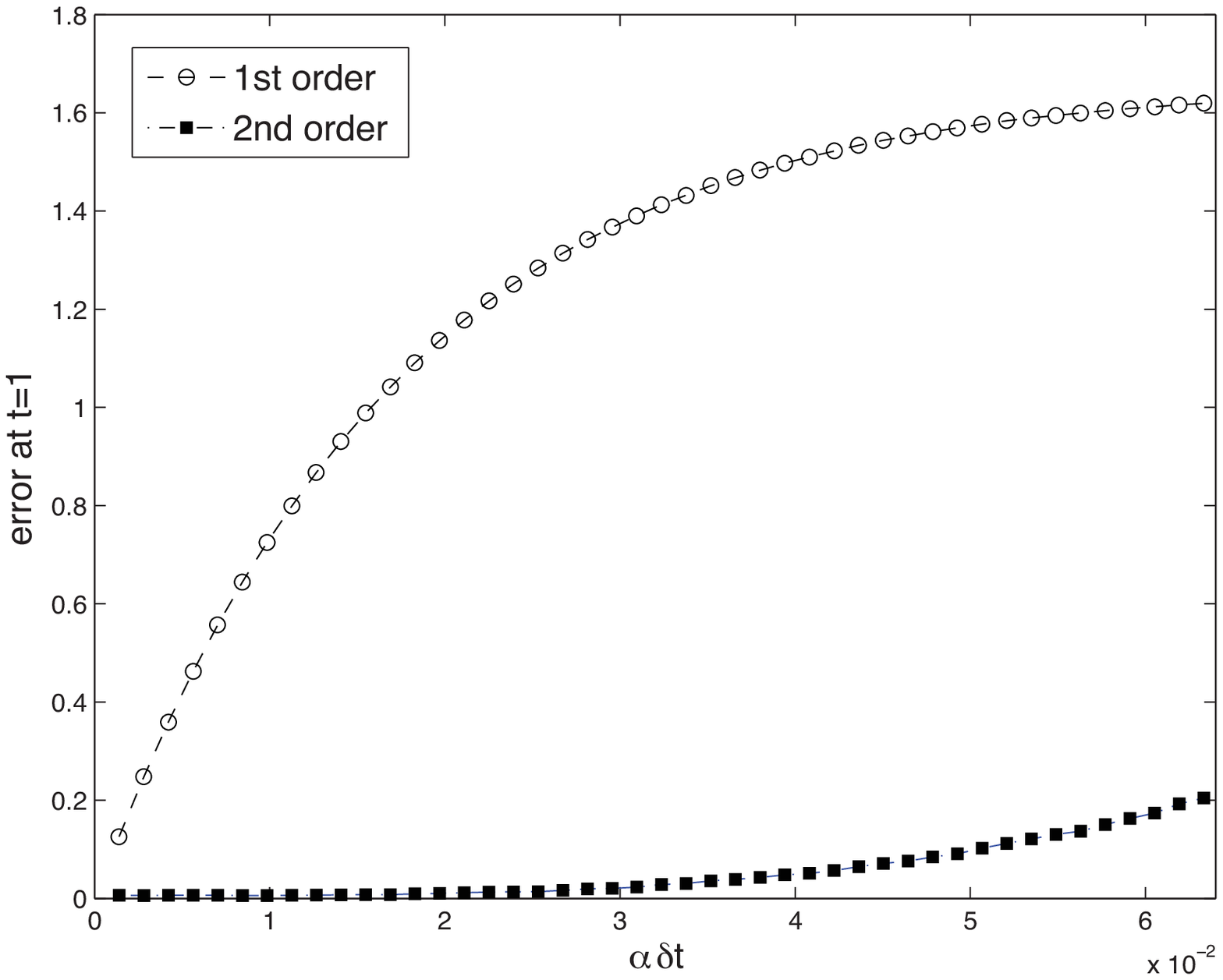}}
		\subfloat[]{\includegraphics[width=7cm]{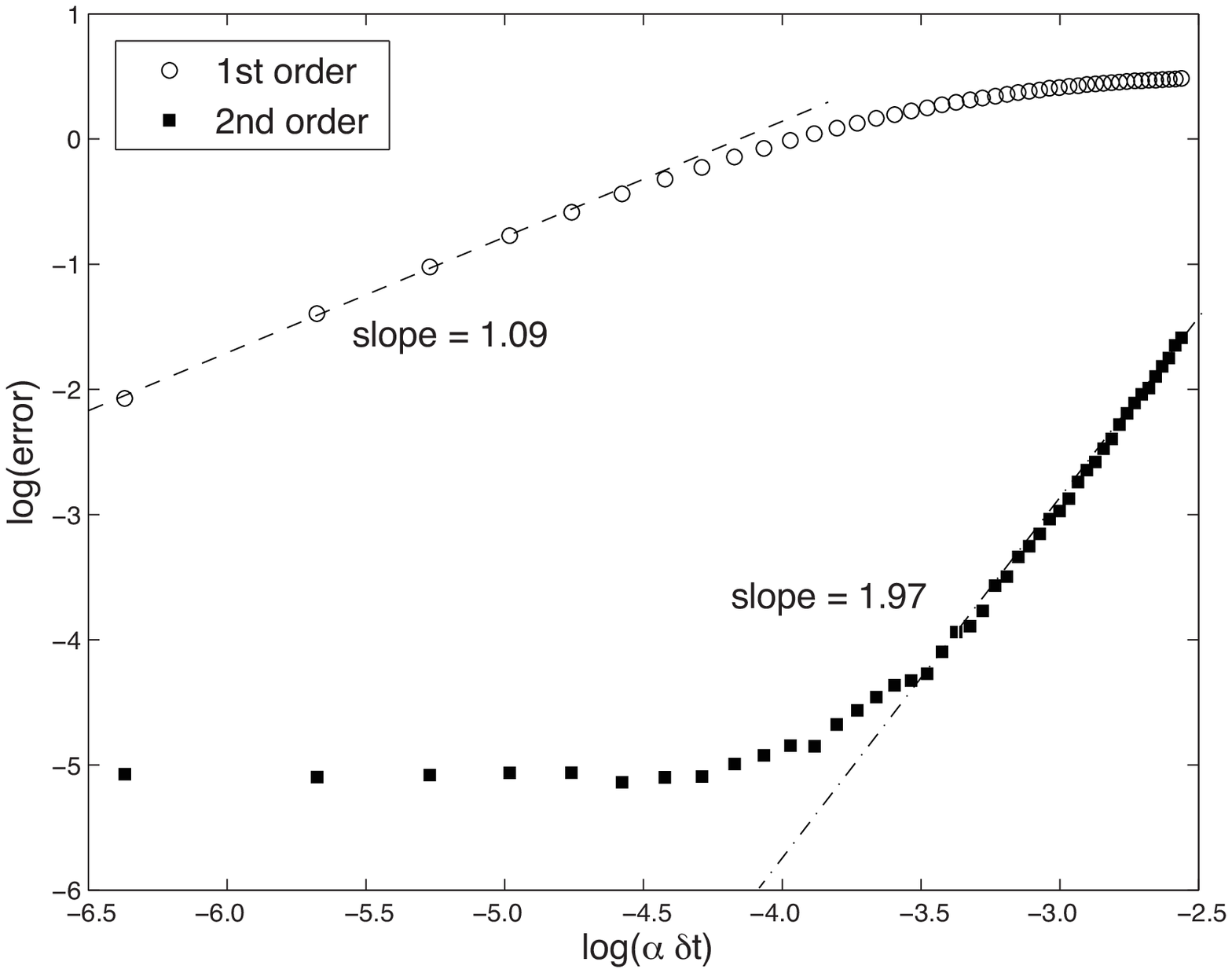}}
		\caption{Errors of the first and second order VSHMM for (\ref{eq:const}). $\epsilon=10^{-4}$ and $\alpha=52.67$.}\label{fig:order}
		\end{center}
	\end{figure}

\subsubsection{Fully Nonlinear Case}
The second test problem is also an expanding spiral problem in $\mathbb{C}^2$ where two spirals are coupled. This system is more general than the previous oscillatory problem in that it is not angular periodic with nonlinear $f_1$. For a fixed $\xi$, the periodicity of each component of $\eta$ depends on $\xi$ and the oscillation of $\xi$ comes from $\eta_1$ and $\eta_2$ simultaneously which has an irrational initial ratio.

\begin{equation}\label{eq:nonlinear}
\begin{split}
	\frac{dx}{dt}=&\frac{x}{\sqrt{|x|}^{3}}+5\left(\frac{Real(x)}{|x|}+\frac{Real(y)}{|y|}\right)\frac{x}{|x|}+\frac{i|x|x}{\epsilon},\\
	\frac{dy}{dt}=&\frac{y}{\sqrt{|y|}^{3}}+\frac{Real(y)y}{|y|^2}+\frac{i|y|y}{\sqrt{2}\epsilon},\\
	&(x(0),y(0))=(1,i)
\end{split}
\end{equation}
The slow and fast variables are given by $\xi=(|x|,|y|)$ and $\eta=(\arg_1(x),\arg_1(y))$ respectively.
\begin{figure}
	\begin{center}
	\subfloat[global]{\includegraphics[width=7cm]{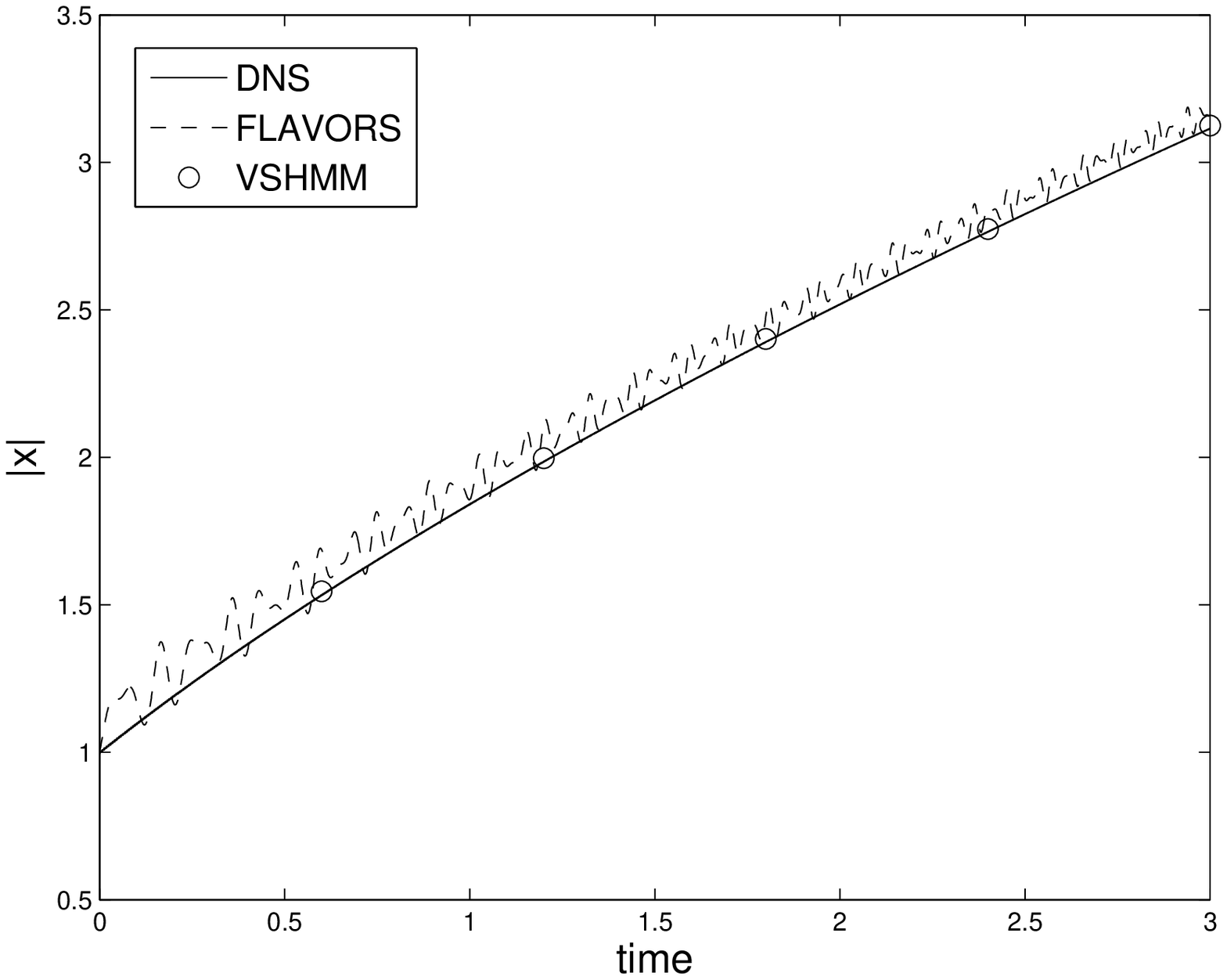}}
	\subfloat[local]{\includegraphics[width=7cm]{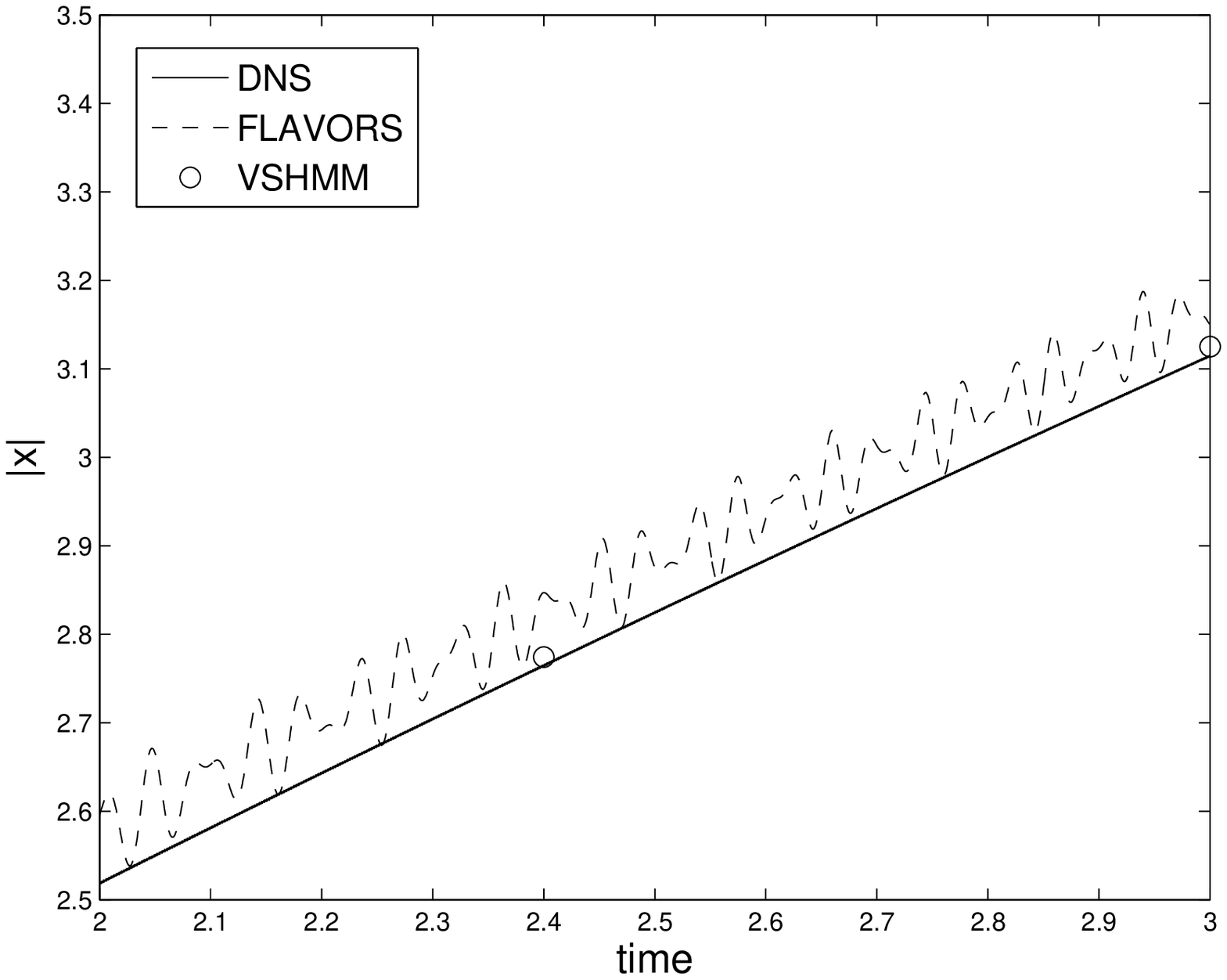}}
	\caption{Fully nonlinear case (\ref{eq:nonlinear}). Plot of the first slow variable $\xi_1=|x|$ computed by DNS, FLAVORS and VSHMM. $\epsilon=5\times 10^{-4}$ and $\alpha=50$ for VSHMM and FLAVORS.}\label{fig:nonlinear}
	\end{center}
\end{figure}
Figure \ref{fig:nonlinear} shows the result of VSHMM and FLAVORS for (\ref{eq:nonlinear}) with $\epsilon=5\times 10^{-4}$ and $\alpha=50$. A fourth order Runge-Kutta method is used for micro step simulation and a second order Runge-Kutta method for mesoscopic time step simulation. VSHMM captures the correct slow variable on the macro time interval, $\Delta T=0.6$. In Figure \ref{fig:op}, it is clear that averaging the FLAVORS solution improves the approximation of the effective solution. Figure \ref{fig:nonlinear} shows that this is not always the case.

\subsubsection{Stellar Orbit Problem with Resonance}
The last numerical experiment is a well studied system taken from the theory of stellar orbits in a galaxy \cite{stellar,stellar1,stellar2},
\begin{equation}
\begin{split}
\nonumber r_1^{''}+a^2r_1=&\epsilon r_2^2,\\
r_2^{''}+b^2r_2=&2\epsilon r_1 r_2
\end{split}
\end{equation}
where $r_1$ is the radial displacement of the orbit of a star from a reference circular orbit, and $r_2$ is the deviation of the orbit from the galactic plane. Here $t$ is actually the angle of the planets in a reference coordinate system. Using an appropriate change of variables, the system can be written in the following form \cite{stellar}
\begin{equation}\label{eq:stellar}
\frac{d\mathbf{x}}{dt}=\frac{1}{\epsilon}\begin{pmatrix}0&a&0&0\\-a&0&0&0\\0&0&0&b\\0&0&-b&0\end{pmatrix}\mathbf{x}+\begin{pmatrix}0\\x_3^2/a\\0\\2x_1x_2/b\end{pmatrix},\quad\mathbf{x}(0)=\begin{pmatrix}1\\0\\1\\0\end{pmatrix},\quad \mathbf{x}\in\mathbb{R}^4
\end{equation}
	with $a=2$, and $b=1$.
In \cite{stellar, IHMM}, it is verified that in the case of $a=\pm 2b$, the system is in resonance and has three hidden slow variables $\xi_i:\mathbb{R}^4\to\mathbb{R}, i=1,2,3$, are given by
	\begin{equation}\label{eq:slowstellar}
	\begin{split}
	\xi_1=&x_1^2+x_2^2\\
	\xi_2=&x_3^2+x_4^2\\
	\xi_3=&x_1x_3^2+2x_2x_3x_4-x_1x_4^2
	\end{split}
	\end{equation}
The resonance of oscillatory modes generates lower order effects, that are captured by VSHMM.
	\begin{figure}
		\begin{center}
		\includegraphics[width=9cm]{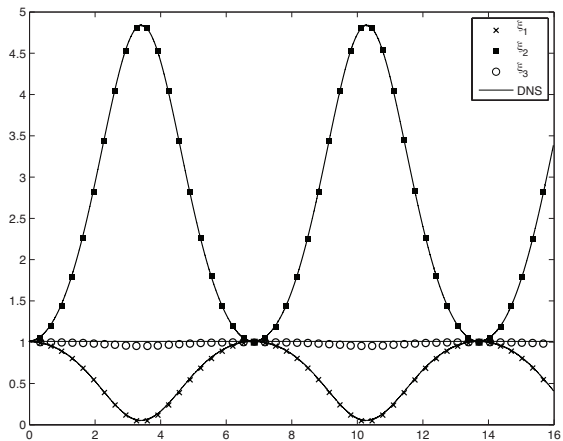}
		\caption{Stellar orbit problem (\ref{eq:stellar}). Plot of the three slow variables (\ref{eq:slowstellar}) computed by DNS (real line) and VSHMM (marked with cross, square and circle). $\epsilon=10^{-4}$ and $\alpha=100$.\label{fig:stellar}}
		\end{center}
	\end{figure}
In Figure  \ref{fig:stellar}, we present a numerical result of our method for $\epsilon=10^{-4}$ and $\alpha=100$.

\section{Conclusions}
We have presented a new multiscale integrator VSHMM for stiff and highly oscillatory dynamical systems. It controls the transient and the amplified oscillations of MSHMM and FLAVORS while preserving the computational complexity and general structure of these methods.This results in an overall higher accuracy. The main idea of the error control is to use variable mesoscopic step sizes determined by special functions satisfying moment and regularity conditions. The proposed method is restricted to ordinary differential equations with two scales. Applications to stochastic differential equations and problems with more than two scales will be reported in a forthcoming paper by the authors \cite{VSSHMM}.

\section*{Acknowledgments}
The research was partially supported by NSF grants DMS-1027952 and DMS-1217203. The authors thank Christina Frederick for comments and a careful reading of the manuscript

\end{document}